\newtheorem{theorem}{Theorem}[section]
\newaliascnt{conj}{theorem}
\newaliascnt{cor}{theorem}
\newaliascnt{lemma}{theorem}
\newaliascnt{fact}{theorem}
\newaliascnt{claim}{theorem}
\newaliascnt{prop}{theorem}
\newaliascnt{definition}{theorem}
\newaliascnt{assump}{theorem}
\newaliascnt{question}{theorem}
\newaliascnt{convention}{theorem}
\newtheorem{lemma}[lemma]{Lemma}
\newtheorem{prop}[prop]{Proposition}
\newtheorem{definition}[definition]{Definition}
\newtheorem{assump}[assump]{Assumption}
\numberwithin{theorem}{section}
\numberwithin{figure}{section}
\let\oldtheequation\theequation
\renewcommand\tagform@[1]{\maketag@@@{\ignorespaces#1\unskip\@@italiccorr}}
\renewcommand\theequation{(\oldtheequation)}
\theoremstyle{remark}
\newaliascnt{rmk}{theorem}
\newtheorem{remark}[rmk]{Remark}
\theoremstyle{remark}
\newaliascnt{exam}{theorem}
\newtheorem{exam}[exam]{Example}
\def\sek~{\S{}}
\newcommand{\mylabel}[2]{#2\def\@currentlabel{#2}\label{#1}}
\DeclareFontFamily{U}{mathx}{\hyphenchar\font45}
\DeclareFontShape{U}{mathx}{m}{n}{
	<5> <6> <7> <8> <9> <10>
	<10.95> <12> <14.4> <17.28> <20.74> <24.88>
	mathx10
}{}
\DeclareSymbolFont{mathx}{U}{mathx}{m}{n}
\DeclareMathAccent{\widecheck}{0}{mathx}{"71}
\DeclareMathAccent{\wideparen}{0}{mathx}{"75}
\DeclarePairedDelimiter\abs{\lvert}{\rvert}
\DeclarePairedDelimiter\norm{\lVert}{\rVert}
\DeclareMathOperator{\std}{std}
\DeclareMathOperator{\ind}{ind}
\DeclareMathOperator{\link}{Lk}
\DeclareMathOperator{\supp}{Supp}
\DeclareMathOperator{\tb}{tb}
\newcommand{\Cbb}{\mathbb{C}}
\newcommand{\Rbb}{\mathbb{R}}
\newcommand{\Zbb}{\mathbb{Z}}
\newcommand{\pbf}{\mathbf{p}}
\newcommand{\qbf}{\mathbf{q}}
\newcommand{\xbf}{\mathbf{x}}
\newcommand{\Fcal}{\mathcal{F}}
\newcommand{\Gcal}{\mathcal{G}}
\newcommand{\Ocal}{\mathcal{O}}
\newcommand{\open}{\mathcal{O}p}
\newcommand{\s}{\vskip.1in}
\newcommand{\n}{\noindent}
\newcommand{\p}{\partial}
\newcommand{\be}{\begin{enumerate}}
	\newcommand{\ee}{\end{enumerate}}
\newcommand{\op}{\operatorname}
\begin{document}

\title{Existence of coLegendrians in contact $5$-manifolds}


\author{Yang Huang}


\thanks{YH is supported by the Centre for Quantum Mathematics at the University of Southern Denmark.}

\begin{abstract}
We introduce the notion of coLegendrian submanifold in a contact manifold as a middle-dimensional coisotropic submanifold and study its existence in dimension $5$ via contact Morse theory. As an application, the second generation of Orange is now available.
\end{abstract}

\maketitle

\tableofcontents

\section{Introduction}
The study of embedded surfaces is indispensable in $3$-manifold topology and has a history of more than $100$ years. For the relatively new interests in understanding contact structures on $3$-manifolds, one finds it important, not surprisingly, to study embedded surfaces together with germs of contact structures on them. As a relative version, Legendrian knots are studied, with great success, by examining the germ of contact structure on spanning surfaces, e.g., Seifert surfaces. This tradition goes back to the work of Bennequin \cite{Ben83}, Eliashberg \cite{Eli89,Eli92}, and Fuchs-Tabachnikov\cite{FT97} at the very beginning of the subject, followed by more systematic development by Giroux \cite{Gi91,Gi00}, Honda \cite{Hon00}, Etnyre-Honda \cite{EH03} and many others. As of today, $3$-dimensional contact topology is fairly well understood, at least in comparison with the higher dimensional case.

The focus of this paper is on contact manifolds $(M,\xi)$ of dimension $2n+1>3$. In this case, one can develop a parallel theory for hypersurfaces $\Sigma^{2n} \subset M$ generalizing the study of surfaces in contact $3$-manifolds. This is carried out in \cite{HH18,HH19}, where a \emph{contact Morse theory} is established in the spirit of Eliashberg-Gromov \cite{EG91} and refining Giroux's work \cite{Gi91,Gi00} in dimension $3$ and \cite{Gi02} in higher dimensions. However, the corresponding relative theory for Legendrian submanifolds $\Lambda \subset M$ breaks down for dimensional reasons, i.e., $\Lambda$ cannot be the boundary of a hypersurface unless $\dim M=3$. As a piece of jargon, we say a compact submanifold $Y \subset M$ is a \emph{filling} of $\Lambda$ if $\Lambda=\p Y$. All (sub-)manifolds in this paper will be assumed to be orientable.

One motivation for studying fillings of $\Lambda$ (and the associated contact germ) comes from the desire of understanding isotopies of Legendrians. Suppose $\Lambda_t, t \in [0,1]$, is a Legendrian isotopy. Information of this isotopy is captured by the totality $Y \coloneqq \cup_{t \in [0,1]} \Lambda_t$, which we pretend to be a submanifold although it is often not. Then $Y$ is a filling of $\Lambda_0 \cup \Lambda_1$ and it is foliated by Legendrians. The above consideration yields two important, but loosely formulated, observations: first, if one wants to know whether two Legendrians $\Lambda_0,\Lambda_1$ are Legendrian isotopic (given they are topologically isotopic), then one can look for a (topologically trivial) filling of $\Lambda_0 \cup \Lambda_1$ and try to normalize the contact germ on the filling so that it looks like the totality of a Legendrian isotopy; second, a submanifold smoothly foliated by Legendrian leaves is a special case of the so-called \emph{coisotropic submanifolds} introduced in \cite{Hua15}. We recall the definition of coisotropic submanifolds and introduce the key object of this paper: \emph{coLegendrian submanifolds}, as follows.

\begin{definition} \label{defn:coLeg submfd}
A smoothly embedded submanifold $Y \subset (M,\xi)$ is \emph{coisotropic} if $T_x Y \cap \xi(x) \subset \xi(x)$ is a coisotropic subspace with respect to the canonical conformal symplectic structure for every $x \in Y$. A coisotropic $Y$ is a \emph{coLegendrian} if $\dim Y=n+1$, given $\dim M=2n+1$.
\end{definition}

CoLegendrians (under a different name) and the associated Legendrian foliations were studied in \cite{Hua15,Hua14} with little success for one reason: they are extremely difficult to find in a given contact manifold. Indeed, Gromov's $h$-principle techniques from \cite{Gro86} can, at best, be used to show the existence of \emph{immersed} coLegendrians and not the embedded ones. Nonetheless, under different disguises, coLegendrians have appeared in the literature many times, mostly for a completely different reason: to detect the tight--overtwisted dichotomy of contact structures due to Eliashberg \cite{Eli89} in dimension $3$ and Borman-Eliashberg-Murphy \cite{BEM15} in higher dimensions. See, for example, Niederkr\"uger \cite{Nie06}, Massot-Niederkr\"uger-Wendl \cite{MNW13} and \cite{HH18}.

It is the purpose of this paper to show the abundance of (embedded) coLegendrians with certain singularities, which will be explained later, in any contact $5$-manifold using contact Morse theory. In the rest of the introduction, we will sketch the main ideas of this paper, leaving technical details to subsequent sections.

Recall from \cite{HH19} that a hypersurface $\Sigma \subset (M,\xi)$ is \emph{Morse} if the characteristic foliation $\Sigma_{\xi}$ is a Morse vector field, i.e., it is gradient-like for some Morse function on $\Sigma$. Furthermore, we say $\Sigma$ is Morse$^+$ if, in addition, there exists no flow line from a negative critical point of $\Sigma_{\xi}$ to a positive one. \emph{In this paper, (Morse) singularities of characteristic foliations will always be called critical points even without explicit mention of the Morse function.} By analogy with the definition of regular Lagrangians in the theory of Weinstein manifolds by Eliashberg-Ganatra-Lazarev \cite{EGL20}, a Legendrian $\Lambda \subset \Sigma$ is said to be \emph{regular} if it is tangent to $\Sigma_{\xi}$. Similarly, a coLegendrian $Y \subset \Sigma$ is \emph{regular} if it is tangent to $\Sigma_{\xi}$. The key point here is that regular (co)Legendrians inherits a Morse function, or equivalently a handle decomposition, from $\Sigma$.

Let's assume $\dim M=5$ for the rest of the introduction although many of our discussions can be easily generalized to any dimension. We now describe the singularities of a regular coLegendrian $Y$ in order for it to exist in abundance. Suppose $Y$ passes through an index $0$ critical point $p_0 \in \Sigma$. A neighborhood of $p_0$ in $\Sigma$ can be identified with the standard symplectic $(B^4,\omega_{\std})$ such that $p_0=0$ and the characteristic foliation is identified with the Liouville vector field $X=\tfrac{1}{2}r\p_r$. The \emph{link} of $p_0$ is the standard contact $3$-sphere $\p B^4$. Then $\link(Y,p_0) \coloneqq Y \cap \p B^4$ is a (smooth) $2$-sphere in $(S^3,\xi_{\std})$. We say $p_0$ is a \emph{cone singularity} of $Y$. Note that topologically speaking, such a cone singularity can always be smoothed. However, geometrically speaking, with respect to the Euclidean metric on $B^4$, the cone singularity is smooth precisely when $\link(Y,p_0)$ is a great $S^2$ in the round $S^3$. Contact topology stands in between topology and geometry, so does the cone singularity. Namely, an $S^2 \subset (S^3,\xi_{\std})$ is said to be \emph{standard} if its characteristic foliation has exactly one source and one sink and every flow line goes from the source to the sink. For example, if we identify $B^4 \subset \Cbb^2$ with the unit ball such that the contact structure on $\p B^4$ is given by the tangential complex lines. Then one can check that every great $S^2 \subset \p B^4$ is standard.

We say $p_0$ is \emph{smooth(able)} if $\link(Y,p_0)$ is standard. If $p_0$ is smoothable, then indeed it can be smoothed, while staying regular, by a small perturbation of $\Sigma$. Similarly, cone singularities can be defined for index $4$ critical points $p_4 \in \Sigma$ by simply reversing the direction of the characteristic foliation.

With the above preparation, we can state the following existence theorem of regular coLegendrians.

\begin{theorem} \label{thm:coLeg approx}
Suppose $(M,\xi)$ is contact $5$-manifold. Then any closed $3$-submanifold $Y$ with trivial normal bundle can be $C^0$-approximated by a regular coLegendrian with isolated cone singularities. If $Y$ is compact with (smooth) Legendrian boundary, then the same approximation holds such that $\p Y$ is a regular Legendrian. In particular, the cone singularities stay away from $\p Y$.
\end{theorem}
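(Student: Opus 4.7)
The plan is to construct the approximating $Y'$ inside a carefully chosen Morse hypersurface $\Sigma\supset Y'$ so that $Y'$ is tangent to the characteristic foliation $\Sigma_\xi$, with cone singularities appearing precisely at the finitely many index-$0$ and index-$4$ critical points of $\Sigma_\xi$ that $Y'$ passes through. The initial data is the trivialization $\nu Y\cong Y\times\Rbb^2$, which gives a tubular embedding $Y\times B^2\hookrightarrow M$ and a preliminary hypersurface $\Sigma_0\coloneqq Y\times[-1,1]\times\{0\}$ containing $Y$. A generic perturbation makes $\Sigma_0$ a Morse hypersurface in the sense of \cite{HH19}, and the remaining work is to homotope $\Sigma_0$ through Morse hypersurfaces so that a $C^0$-small perturbation $Y'$ of $Y$ becomes saturated by characteristic orbits.

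Next I would fix a Morse function $g\colon Y\to\Rbb$; all nontrivial local behaviour of the desired coLegendrian will be concentrated at the critical points of $g$. Near a critical point $p$ of index $k$ of $g$, I extend $g$ to a local Morse function $f$ on a thickening of $Y$ in $\Sigma$ by adding $\pm x_4^2$ in the normal coordinate, which makes $p$ a critical point of $\Sigma_\xi$ of Morse index $k$ or $k+1$. For $k=0$ (resp.\ $k=3$), I select the extension yielding index $0$ (resp.\ index $4$) in $\Sigma$, so that $p$ is a source (resp.\ sink) of $\Sigma_\xi$; then $Y$ appears near $p$ as a cone on $\link(Y,p)\subset(S^3,\xi_{\std})$, and by choosing the local model so that $\link(Y,p)$ is a standard $S^2$, the cone singularity is smoothable. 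For $k\in\{1,2\}$, either extension makes $T_pY$ a $3$-dimensional invariant subspace of the linearization of $\Sigma_\xi$ at $p$; in this case $Y'$ remains smooth at $p$, the induced vector field $\Sigma_\xi|_{Y'}$ agrees with $\nabla g$, and no cone singularity is introduced. Away from critical points of $g$, the tangency $\Sigma_\xi\subset TY'$ is an open condition along regular gradient-flow cylinders and is arranged by a direct normalization of $\Sigma$ transverse to these cylinders, in the spirit of the trajectory lemmas of \cite{HH19}.

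The main obstacle is the global matching: the local models at the various critical points of $g$ must be assembled into a single Morse$^+$ hypersurface $\Sigma$ while preserving tangency of $\Sigma_\xi$ to the partially constructed $Y'$. The construction should be run inductively over the handle decomposition of $\Sigma$ associated with $g$, using the contact Morse moves of \cite{HH18,HH19} -- births, deaths, handle slides and rearrangements -- to interpolate across handle-cobordisms, align neighbouring local models, and arrange that the link $\link(Y',p)$ of each source or sink is a standard $2$-sphere. For the relative case with Legendrian boundary $\bdry Y$, I would pick $g$ with no critical points in a collar of $\bdry Y$ and compatible with the Legendrian condition, arranging that the zeros of the induced flow on the $2$-surface $\bdry Y$ correspond to intermediate (index $1$, $2$ or $3$ in $\Sigma$) critical points of $\Sigma_\xi$; then $Y'$ remains smooth along $\bdry Y$, and all cone singularities sit in the interior of $Y'$, as desired.
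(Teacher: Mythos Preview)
Your proposal has the right overall architecture---embed $Y$ in a hypersurface $\Sigma$, make $\Sigma$ Morse, arrange $Y$ tangent to $\Sigma_\xi$---but there is a genuine gap at the core step: the claim that at critical points of $g$ of index $k\in\{1,2\}$ the approximation $Y'$ ``remains smooth at $p$'' is unjustified and, in the models that actually arise, false. The characteristic foliation $\Sigma_\xi$ is not prescribed by writing down a Morse function $f=g\pm x_4^2$; it is determined by the embedding of $\Sigma$ in $(M,\xi)$. What one can do (via \cite{HH19}) is perturb $\Sigma$ so that $\Sigma_\xi$ is Morse and $Y$ is tangent to it, but then the local models at $Y$-index $1$ and $2$ critical points are \emph{dictated}, not chosen, and they are generically singular: e.g.\ a $Y$-index $1$ handle inside a $\Sigma$-index $2$ handle has the form $\delta\times B^2$ with $\delta\subset B^2_x$ a union of two radii at an arbitrary angle, producing a corner along $\{0\}\times B^2$ (see \autoref{subsubsec:poitive H_1}, Case~1), and the $\Sigma$-index $1$ case produces a cone over a loop. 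The bulk of the paper's proof (Substeps 2.2--2.5 of \autoref{prop:coLeg approx}) is precisely the elimination of these corners and cone-over-loop singularities via explicit Hamiltonian perturbations of the Liouville form, ``turbine'' handle replacements, and transversalization of $2$-handle attaching curves; your proposal skips all of this.

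Two smaller points: tangency $\Sigma_\xi\subset TY'$ is a closed condition, not an open one, so the phrase ``open condition along regular gradient-flow cylinders'' does not carry the weight you place on it; and in the relative case the paper needs a separate construction with \emph{partial} coLegendrian handles plus a nontrivial modification to make the inner boundary of a collar of $\partial Y$ transverse to the restricted characteristic foliation (Steps 1--2 of \autoref{prop:coLeg approx with bdry}), which your collar sketch does not address.
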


\begin{remark}
A technical, but important, point in our approach to (co)Legendrian submanifolds in this paper is that all submanifolds are \emph{unparameterized}, i.e., they are subsets of the ambient manifold rather than (smooth) embeddings. This is to be compared with \cite{Mur12}, where all Legendrians (singular or smooth) are parameterized.
\end{remark}

Note that the triviality of $T_Y M$ is necessary since the definition of regularity requires $Y$ to be contained in a hypersurface (here we use $\dim M=5$). In fact, \autoref{thm:coLeg approx} follows readily from \cite[Section 12]{HH19} without the conditions on the singularities of $Y$, and the main contribution of this paper is to eliminate all the singularities except possibly the cones. The proof of \autoref{thm:coLeg approx} will be carried out in \autoref{sec:regular coLeg}, after we set up the background theory of regular Legendrians in \autoref{sec:regular Leg}.

Let's point out that \autoref{thm:coLeg approx} by itself is rather useless in improving our understanding of either contact structures or Legendrians. Instead, it is through the proof of the theorem that we understand how regular coLegendrians are built out of handles and how such handles can be manipulated. However, since our main focus of this paper is just to establish the existence of coLegendrians, we postpone a more thorough study of coLegendrian handle decomposition to a forthcoming work where the theory of coLegendrians will be applied to study isotopies of Legendrians.

As an application of our study of coLegendrians, we introduce in \autoref{sec:orange II} the second generation of \emph{overtwisted orange} (cf. \cite{HH18}), which we call \emph{orange II} and denoted by $\Ocal_2$. Although we construct $\Ocal_2$ as a regular coLegendrian, it is the Legendrian foliation $\Fcal_{\Ocal_2}$ which determines the (overtwisted) contact germ. Hence, in fact, one can define $\Ocal_2$ without any reference to regular coLegendrians, and \autoref{sec:orange II} can be understood independent of the rest of the paper. The second generation orange is considerably simpler than the first and maybe also other known overtwisted objects. But the road leading to its discovery is rougher than one might think.

We wrap up the introduction with an explanation of our choice in this paper of giving up the term ``convex hypersurface theory'' used in \cite{HH19}.

\begin{remark}
In the foundational papers \cite{Gi91,Gi00}, Giroux introduced the notion of \emph{convex hypersurfaces} into contact topology, which has then been used ubiquitously especially in the study contact $3$-manifolds and also in \cite{HH18,HH19}. A hypersurface is \emph{convex} in the sense of Giroux if it is transverse to a contact vector field. A key feature of convex surfaces in dimension $3$ is that the so-called \emph{dividing set} completely determines the contact germ on the surface, reducing $3$-dimensional contact topology essentially to a combinatorial problem. Such useful feature is, unfortunately, not available in higher dimensions. Indeed, the main contribution of \cite{HH19} is to show that $\Sigma_{\xi}$ can be made Morse by a $C^0$-small perturbation of $\Sigma$, rather than showing $\Sigma$ can be made convex, even though Morse$^+$ hypersurfaces are convex as it turns out. As we will see in this paper, many hypersurfaces that are important in our study of contact structures are \emph{not} convex, but only Morse, even in dimension $3$. Due to this change of perspective, we will drop Giroux's convexity from our terminology and rely instead on Morse-theoretic notions. However, certain convenient terminologies, such as the dividing set, will be retained.
\end{remark}

\section{Basics on contact Morse theory} \label{sec:contact Morse theory}
In this section, we recall the contact Morse theory on hypersurfaces established in \cite{HH19}, which is also the starting point of this paper.

Let $(M,\xi)$ be a contact manifold of dimension $2n+1$. Following \cite{HH19}, a hypersurface $\Sigma \subset M$ is \emph{Morse} if the characteristic foliation $\Sigma_{\xi}$, viewed as a vector field, is Morse, i.e., there exists a Morse function $f: \Sigma \to \Rbb$ such that $\Sigma_{\xi}$ is gradient-like with respect to $f$. A Morse hypersurface $\Sigma$ is Morse$^+$ if, in addition, there exists no flow lines of $\Sigma_{\xi}$ from a negative critical point to a positive one. Suppose $\Sigma$ is Morse$^+$. Define the \emph{dividing set} $\Gamma \subset \Sigma$ to be the boundary of the handlebody built by the positive handles. Clearly $\Gamma$ is well-defined up to isotopy. Moreover $\Sigma \setminus \Gamma$ is naturally the disjoint union of two Weinstein manifolds. Following \cite{Gi91}, write $\Sigma \setminus \Gamma=R_+ \cup R_-$ such that $R_{\pm}$ is the Weinstein manifold built by the positive/negative handles, respectively.

\begin{remark}
In this paper, we will not in general assume that $\Sigma$ is closed, and \emph{no} boundary condition will be imposed in general, e.g., $\p \Sigma$ needs not to be transverse or tangent to $\Sigma_{\xi}$. Of course, in this case, one cannot decompose $\Sigma$ into Weinstein manifolds as in the closed case.
\end{remark}



The following result from \cite{HH19} shows that the assumption of $\Sigma_{\xi}$ being Morse is rather mild.

\begin{theorem} \label{thm:morse is generic}
Any hypersurface can be $C^0$-approximated by a Morse hypersurface, which can be further $C^{\infty}$-perturbed to becomes Morse$^+$.
\end{theorem}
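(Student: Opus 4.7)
The plan is to handle the two claims in sequence: first a $C^0$-approximation by a Morse hypersurface, then a $C^\infty$-small refinement to Morse$^+$. The mechanism in both cases is to parametrize nearby hypersurfaces by \emph{contact Hamiltonians} $H : \Sigma \to \Rbb$: every such $H$ determines a contact vector field $X_H$ in a neighborhood of $\Sigma$ (via $\alpha(X_H) = H$ for a fixed contact form $\alpha$ with $\ker\alpha = \xi$), and flowing $\Sigma$ by $X_H$ for small time yields a nearby hypersurface whose characteristic foliation depends, to first order, linearly on $H$. Thus both statements can be translated into genericity statements on the function space $C^\infty(\Sigma)$.

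For the Morse approximation, recall that $\Sigma_\xi$ is the line field $\ker(d\beta)$, where $\beta = \alpha|_\Sigma$, oriented using $\alpha \wedge (d\alpha)^n$, with singularities precisely at the points where $\beta$ vanishes, i.e.\ where $\xi = T_x\Sigma$. I would cover $\Sigma$ by a locally finite family of Darboux charts, fix a triangulation subordinate to this cover, and proceed cell by cell. At each step one produces a $C^0$-small deformation of $\Sigma$ compactly supported in a slight thickening of the current cell such that, afterwards, $\Sigma_\xi$ is Morse on the closed cell. Inside a cell this reduces to a transversality statement: the map $H \mapsto \Sigma_\xi$ has sufficiently rich linearization, so by a Sard-type argument a residual set of $H$ produces a Morse characteristic foliation on that cell. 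The Morse condition is $C^1$-open, so subsequent steps can be arranged not to undo earlier progress, and the resulting global deformation is $C^0$-small by construction.

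For the Morse$^+$ refinement, assume now that $\Sigma$ is Morse. Offending flow lines connect a negative critical point $p_-$ to a positive one $p_+$. When $\ind(p_-) \ge \ind(p_+)$ the space of such connecting trajectories is cut out transversely in parameter space, so a generic $C^\infty$-small contact-Hamiltonian perturbation of $\Sigma$ breaks them. When $\ind(p_-) < \ind(p_+)$, one first invokes the classical Morse-theoretic rearrangement lemma to place all positive critical points above all negative ones in the Morse filtration; after this rearrangement no gradient trajectory can run from $p_-$ up to $p_+$, since the values of the compatible Morse function would have to increase along the flow. Each of these moves can be realized by a $C^\infty$-small contact Hamiltonian and preserves the Morse property by openness.

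The main obstacle is the transversality claim embedded in the second paragraph: one must verify that variation of $H$ inside a Darboux chart realizes enough deformations of $\Sigma_\xi$ on that chart to invoke genericity. In dimension $3$ this is essentially Giroux's original computation, while the higher-dimensional version developed in \cite{HH19} requires a careful, coordinate-dependent analysis of how $H$ changes both the locus $\{\beta = 0\}$ of singularities and the $2$-form $d\beta$ away from it. Everything else in the sketch — cell-by-cell extension, openness of the Morse condition, and the rearrangement of critical points — is formal once this local surjectivity is in hand.
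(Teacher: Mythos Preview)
The paper does not prove this theorem at all; it is quoted verbatim as a result of \cite{HH19}, and the entire content of \autoref{sec:contact Morse theory} is a summary of that paper. So there is no ``paper's own proof'' to compare against here, only the actual argument in \cite{HH19}.

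That said, your sketch contains a genuine gap in the first part. You treat ``$\Sigma_\xi$ is Morse'' as a transversality condition achievable by a Sard-type argument on the space of contact Hamiltonians. But recall the definition: $\Sigma_\xi$ is Morse if it is \emph{gradient-like} for some Morse function on $\Sigma$. This is far stronger than having nondegenerate zeros --- it forbids all recurrent dynamics (periodic orbits, invariant tori, nontrivial chain recurrence). On a surface ($\dim M = 3$) this distinction is mild, since Poincar\'e--Bendixson and Peixoto-type arguments let one pass from Morse--Smale to gradient-like, which is why Giroux's $C^\infty$-genericity works. But once $\dim\Sigma \ge 4$, a $C^\infty$-generic vector field is very far from gradient-like, and no amount of transversality will eliminate recurrence. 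This is precisely why the theorem only claims $C^0$-approximation rather than $C^\infty$-genericity; if your Sard argument worked, it would prove the stronger $C^\infty$ statement, which is false.

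The actual mechanism in \cite{HH19} is not genericity but explicit construction: one installs local models (the ``folding'' or ``plug'' constructions alluded to later in this paper, e.g.\ in Step~1 of the proof of \autoref{prop:coLeg approx}) that force prescribed Morse dynamics on pieces of $\Sigma$, at the cost of moving $\Sigma$ by a $C^0$-large-in-$C^1$ but $C^0$-small amount. Your cell-by-cell scaffolding is reasonable, but what happens inside each cell is an explicit surgery, not a residual-set argument. Your treatment of the Morse $\Rightarrow$ Morse$^+$ step is closer to correct in spirit, since once the vector field is already gradient-like the remaining task really is a finite-dimensional transversality/rearrangement problem.
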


Indeed, \autoref{thm:morse is generic} can be extended to a relative version. Namely, if there exists a closed subset $K \subset \Sigma$ such that $\Sigma_{\xi}$ is already Morse on an open neighborhood of $K$, then $\Sigma$ be $C^0$-approximated by a Morse hypersurface relative to $K$.

\section{Regular Legendrians} \label{sec:regular Leg}
In this section, we apply the contact Morse theory introduced in \autoref{sec:contact Morse theory} to Legendrian submanifolds. We work with arbitrary dimensions in this section since there is nothing special about the theory of regular Legendrians in dimension $5$, in contrast to the theory of coLegendrians to be discussed in \autoref{sec:regular coLeg}.

Inspired by the notion of \emph{regular Lagrangians} in Weinstein manifolds introduced by Eliashberg-Ganatra-Lazarev \cite{EGL20}, we define \emph{regular Legendrians} as follows. 

\begin{definition} \label{defn:regular legendrian}
A Legendrian $\Lambda \subset \Sigma \subset (M,\xi)$ is \emph{regular} with respect to a Morse hypersurface $\Sigma$ if it is tangent to $\Sigma_{\xi}$, and nondegenerate critical points of $\Sigma_{\xi}$ on $\Lambda$ restrict to nondegenerate critical points of $\Sigma_{\xi}|_{\Lambda}$ in $\Lambda$.
\end{definition}


It is a difficult problem (cf. \cite[Problem 2.5]{EGL20}) in symplectic topology to find non-regular Lagrangians in a Weinstein manifold. It turns out that the contact topological counterpart is much more flexible. Moreover, observe that the very definition of regular Legendrians depends on a choice of the hypersurface $\Sigma \supset \Lambda$. We will familiarize ourselves with regular Legendrians through the following examples.

\begin{exam} \label{ex:collar nbhd}
Any Legendrian $\Lambda$ is regular with respect to the hypersurface $\Sigma \coloneqq T^{\ast} \Lambda \subset (M,\xi)$ such that $\Lambda \subset T^{\ast} \Lambda$ as the 0-section. Indeed, by Legendrian neighborhood theorem, there exists a tubular neighborhood $(U(\Lambda), \xi|_{U(\Lambda)})$ of $\Lambda$ which is contactomorphic to a tubular neighborhood of the 0-section in $J^1(\Lambda) = \Rbb_z \times T^{\ast} \Lambda$, equipped with the standard contact structure. It remains to Morsify the canonical Liouville form $pdq$ on $T^{\ast} \Lambda$  (cf. \cite[Example 11.12]{CE12}) so that $\Sigma$ is Morse and $\Lambda$ is tangent to $\Sigma_{\xi}$.
\qed
\end{exam}

\begin{exam} \label{ex:funny unknot}
This example is the reformulation of a result of Courte-Ekholm in \cite{CE18}. Let $\Sigma \coloneqq S^{2n} \subset (\Rbb^{2n+1}, \xi_{\std})$ be the unit sphere, which is clearly Morse$^+$. Moreover, in the decomposition $\Sigma \setminus \Gamma = R_+ \cup R_-$, the dividing set $\Gamma$ is contactomorphic to $(S^{2n-1}, \eta_{\std})$, and $R_{\pm}$ are both symplectomorphic to the standard symplectic vector space $(\Rbb^{2n}, \omega_{\std})$.
	
Let $(B^{2n}, \omega_{\std})$ be the standard symplectic filling of $(S^{2n-1}, \eta_{\std})$. Suppose $\Lambda_0$ is a Legendrian sphere in $S^{2n-1}$ which bounds a regular Lagrangian disk $D \subset B^{2n}$. Identify $\Gamma=S^{2n-1}$ and take two copies $D_{\pm} \subset R_{\pm}$ of $D$, modulo obvious completions, respectively.
	
Define the Legendrian sphere $\Lambda \coloneqq D_+ \cup_{\Lambda_0} D_- \subset \Sigma$. Then $\Lambda$ is regular by construction. Moreover, it follows from \cite{CE18} that $\Lambda$ is in fact Legendrian isotopic to the standard Legendrian unknot, regardless of the choices of $D$ and $\Lambda_0$.
\qed
\end{exam}

\begin{exam} \label{ex:knot in 3d}
We restrict our attention to contact 3-manifolds in this example. Suppose $\Sigma \subset (M^3,\xi)$ is a Morse$^+$ surface, and $\Lambda \subset \Sigma$ is a regular Legendrian loop. Then $\Lambda$ is transverse to the dividing set $\Gamma$. Let $\abs{\Lambda \cap \Gamma}$ be the (honest) count of intersection points, which turns out to be always even.
	
Recall that given a framing $\sigma$ on $\Lambda$, i.e., a trivialization of the normal bundle $T_{\Lambda} M$, the \emph{Thurston-Bennequin invariant} $\tb_{\sigma} (\Lambda) \in \Zbb$ measures the total rotation of $\xi$ along $\Lambda$. See e.g. \cite{Et05} for more details.  Now note that $\Sigma$ uniquely specifies a framing on $\Lambda$, with respect to which we have 
\begin{equation} \label{eqn:tb computation}
\tb_{\Sigma} (\Lambda) = - \tfrac{1}{2}\abs{\Lambda \cap \Gamma}
\end{equation}
	
Suppose $(M,\xi)=(\Rbb^3,\xi_{\std})$. Then one can take $\Sigma$ to be any Seifert surface of $\Lambda$, and $\tb(\Lambda) = \tb_{\Sigma} (\Lambda)$ is independent of the choice of $\Sigma$. It follows from \autoref{eqn:tb computation} that $\Sigma$ can be made Morse$^+$ only if $\tb(\Lambda) \leq 0$. The other classical invariant: the \emph{rotation number} $r(\Lambda)$ (cf. \cite{Et05} for the definition) can also be computed in this setup by
\begin{equation} \label{eqn:rotation number}
r(\Lambda) = \chi(R_+) - \chi(R_-),
\end{equation}
where $\Sigma \setminus \Gamma = R_+ \cup R_-$ is the usual decomposition. Note, however, that $r(\Lambda)$ depends on an orientation of $\Lambda$, which by convention is the induced orientation from an orientation on $\Sigma$.

In the proof of \autoref{prop:regular Leg}, we will generalize \autoref{eqn:tb computation} to Morse surfaces and to higher dimensions (cf. \autoref{eqn:compute 1-framing}). The higher-dimensional counterpart of \autoref{eqn:rotation number} is \autoref{lem:Leg self intersection number}.
\qed
\end{exam}

We highlight a few special features about (regular) Legendrian knots from \autoref{ex:knot in 3d} which, as we will see, are in sharp contrast to the higher-dimensional case. Firstly, a Legendrian $\Lambda$ can be realized as a regular Legendrian in a Morse$^+$ surface $\Sigma$ only if $\tb_{\Sigma}(\Lambda) \leq 0$. Secondly, if $\Sigma$ is a Seifert surface, then the intersection $\Lambda \cap \Gamma$, as a finite set of points, is an invariant of $\Lambda$, i.e., the Thurston-Bennequin invariant. This is not true in higher dimensions, i.e., the topology of $\Lambda \cap \Gamma$ is \emph{not} an invariant of $\Lambda$.

Now we turn to the problem of realizing any Legendrian as a regular Legendrian in a given hypersurface. Let $\Lambda \subset (M,\xi)$ be a $n$-dimensional closed Legendrian. As usual, identify a tubular neighborhood of $\Lambda$ with a tubular neighborhood of the $0$-section in $J^1(\Lambda) = \Rbb_z \times T^{\ast} \Lambda$. Fix a Riemannian metric $g$ on $M$. Define the \emph{normal sphere bundle} 
\begin{equation*}
S_{\Lambda} M \coloneqq \{ v \in T_{\Lambda} M ~|~ \norm{v}_g=1 \}.
\end{equation*}
For a suitable choice of $g$, we can assume $S_{\Lambda} M \subset J^1(\Lambda)$ and $\p_z \in S_{\Lambda} M$.

\begin{definition} \label{defn:1-framing}
A \emph{1-framing} of $\Lambda$ is a section of $S_{\Lambda} M \to \Lambda$. The 1-framing defined by $\p_z$ is called the \emph{canonical 1-framing} of $\Lambda$.
\end{definition}

Up to homotopy, the canonical 1-framing is specified by any unit vector field along $\Lambda$ which is positively transverse to $\xi$. The set of 1-framings of a given Legendrian is described by the following lemma.

\begin{lemma} \label{lem:set of 1-framings}
The homotopy classes of $1$-framings of $\Lambda$ can be canonically identified with $\Zbb$ such that the canonical $1$-framing is identified with zero.
\end{lemma}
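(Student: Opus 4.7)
The plan is to recognize the claim as an instance of the elementary obstruction theory classifying sections of an oriented sphere bundle of top fiber dimension, with the distinguished section $\p_z$ providing the basepoint $0 \in \Zbb$. I will assume $\Lambda$ is connected; the disconnected case holds component by component.

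First I would unpack the structure of $S_\Lambda M \to \Lambda$. The normal bundle $\nu \coloneqq T_\Lambda M/T\Lambda$ has rank $n+1$, and inherits a canonical splitting: it contains the Reeb line $\Rbb\langle\p_z\rangle$, and the Legendrian condition $T\Lambda \subset \xi|_\Lambda$ combined with the conformal symplectic form on $\xi$ identifies $\xi|_\Lambda/T\Lambda$ canonically with $T^*\Lambda$. Hence $\nu \cong \Rbb\langle\p_z\rangle \oplus T^*\Lambda$ as oriented vector bundles --- the first factor oriented by $\p_z$, the second by the orientation of $\Lambda$ --- so that $S_\Lambda M$ is a canonically oriented $S^n$-bundle over the closed oriented $n$-manifold $\Lambda$, with distinguished section $s_0 \coloneqq \p_z$.

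Next I would classify sections up to homotopy relative to $s_0$. Since $\pi_k(S^n) = 0$ for $0 \leq k < n$, any 1-framing $s$ can be homotoped to $s_0$ over the $(n-1)$-skeleton of a CW decomposition of $\Lambda$, and the primary (and, for dimensional reasons, only) obstruction to extending the homotopy across the top cells is a class in $H^n(\Lambda;\pi_n(S^n))$. The canonical generator of $\pi_n(S^n) \cong \Zbb$ supplied by the fiber orientation, paired with the fundamental class of $\Lambda$, identifies this cohomology group with $\Zbb$, sending $s_0$ to $0$ and realizing every integer via a local degree-$k$ modification of $s_0$ inside a coordinate ball. Equivalently, and more concretely, one computes the associated integer as the algebraic intersection number $\#(s \pitchfork (-s_0)) \in \Zbb$ of $s$ with the antipodal section, a count that is well-defined after a generic perturbation because $s$ and $-s_0$ are oriented $n$-submanifolds of the oriented $2n$-manifold $S_\Lambda M$, and that vanishes on $s_0$ since $s_0 \cap (-s_0) = \varnothing$.

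The main obstacle is not well-definedness but canonicity: one must verify that the identification with $\Zbb$ does not depend on auxiliary choices such as the Riemannian metric defining $S_\Lambda M$ or the CW decomposition of $\Lambda$, and that no sign ambiguity is introduced. Each point reduces to observing that every orientation entering the construction is itself canonical --- coming from $\Lambda$, the Reeb vector field, or the conformal symplectic form on $\xi$ --- while the space of Riemannian metrics is convex and thereby supplies canonical isotopies between competing choices of $S_\Lambda M$.
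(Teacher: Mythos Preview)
Your proof is correct and matches the paper's approach: the paper also invokes the Pontryagin--Thom construction and computes the integer difference between two sections $\sigma_0,\sigma_1$ as the signed count of points in $\{x \in \Lambda : \sigma_0(x) = -\sigma_1(x)\}$, which is precisely your intersection number $\#(s \pitchfork (-s_0))$ when $\sigma_1 = s_0$ is the canonical $1$-framing. Your obstruction-theoretic framing and attention to canonicity are more thorough than the paper's terse treatment, but the core idea is identical.
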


\begin{proof}
This is a standard consequence of the Pontryagin-Thom construction (cf. \cite[Chapter 7]{Mil65}). I learned the following trick from Patrick Massot around 2012. Given two sections $\sigma_i: \Lambda \to S_{\Lambda} M,i =0,1$, we consider the following set
\begin{equation*}
L \coloneqq \{ x \in \Lambda ~|~ \sigma_0(x) = -\sigma_1(x) \}.
\end{equation*}
Generically $L$ is an oriented 0-dimensional compact submanifold of $\Lambda$. The signed count of points in $L$ defines the difference between $\sigma_i, i=0,1$. Conversely, given a 1-framing $\sigma: \Lambda \to S_{\Lambda} M$ and an integer $k \in \Zbb$, one can construct a 1-framing $\sigma+k$ by modifying $\sigma$ in a neighborhood of a point.
\end{proof}

\begin{remark}
In dimension $3$, the $1$-framings of Legendrian knots in the sense of \autoref{defn:1-framing} coincides with the usual notation of framings of knots, and the canonical $1$-framing corresponds to the so-called \emph{contact framing}.
\end{remark}

Taking a dual point of view, up to homotopy, any $1$-framing of $\Lambda$ determines locally a hypersurface $\Sigma \supset \Lambda$, and \textit{vice versa}. Using this terminology, we can rephrase \autoref{ex:collar nbhd} as follows: any Legendrian is contained in a hypersurface corresponding to the canonical $1$-framing as a regular Legendrian. As we will explain now, the same holds for any choice of $1$-framing.

\begin{prop} \label{prop:regular Leg}
Given a closed Legendrian $\Lambda \subset (M,\xi)$, any hypersurface $\Sigma$ containing $\Lambda$ can be $C^0$-small perturbed, relative to $\Lambda$, to a new hypersurface $\Sigma'$ such that $\Sigma'_{\xi}$ is Morse, and $\Lambda$ is regular with respect to $\Sigma'_{\xi}$. Moreover, $\Sigma'$ can be made Morse$^+$ if $\dim M \geq 5$.
\end{prop}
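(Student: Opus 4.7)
The plan is to install a standard Morse model for $\Sigma_{\xi}$ in a tubular neighborhood of $\Lambda$, then extend the Morse property globally using the relative version of \autoref{thm:morse is generic}, and finally do a $C^{\infty}$-perturbation for the Morse$^{+}$ upgrade when $\dim M \geq 5$. A preliminary observation is that tangency of $\Sigma_{\xi}$ to $\Lambda$ is automatic and needs no perturbation: at a non-contact point $x \in \Lambda$, the characteristic line $\ell_{x}$ is the symplectic perp in $\xi_{x}$ of the coisotropic hyperplane $T_{x}\Sigma \cap \xi_{x}$, and since $T_{x}\Lambda$ is Lagrangian in $\xi_{x}$ with $T_{x}\Lambda \subset T_{x}\Sigma \cap \xi_{x}$, passing to symplectic perps yields $\ell_{x} \subset T_{x}\Lambda$. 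Hence the real content is to make $\Sigma_{\xi}$ a Morse vector field, with its critical points lying on $\Lambda$ prescribed by a Morse function $g$ on $\Lambda$.

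For the local model, I would identify a Legendrian tubular neighborhood with $J^{1}(\Lambda) = \{(q, p, z)\}$ and write $\Sigma$ as a graph $\{z = f(q, p)\}$ with $f(q, 0) = 0$, after a preliminary $C^{0}$-small adjustment if $\Sigma$ happens to be tangent to $\partial_{z}$ somewhere on $\Lambda$. A direct computation shows that the contact points of $\Sigma$ on $\Lambda$ are precisely the zeros of the vector field $q \mapsto \partial_{p} f(q, 0)$ on $\Lambda$. Fixing a Morse function $g$ on $\Lambda$, I would replace $f$ by a $C^0$-small perturbation whose $1$-jet along $\Lambda$ still realizes the original $1$-framing class but with $\partial_{p} f(q, 0) = \epsilon \nabla g(q)$, tuning the higher-order jets so that the linearized characteristic flow at each contact point is non-degenerate in both the $\Lambda$-tangent and transverse directions. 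This produces a new hypersurface, agreeing with $\Sigma$ outside a neighborhood of $\Lambda$, on which the characteristic foliation is Morse near $\Lambda$ with $\Lambda$ regular in the sense of \autoref{defn:regular legendrian}.

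With Morse-ness already holding in a neighborhood $V$ of $\Lambda$, the relative version of \autoref{thm:morse is generic} applied with $K$ the closure of a slightly smaller neighborhood $V' \subset V$ upgrades the hypersurface to a globally Morse $\Sigma'$ without disturbing the model near $\Lambda$. For the Morse$^{+}$ conclusion when $\dim M \geq 5$, I would apply the same $C^{\infty}$-perturbation used in the second half of \autoref{thm:morse is generic} away from $\Lambda$ to eliminate flow lines from negative critical points to positive ones. The dimensional hypothesis is essential here: \autoref{eqn:tb computation} shows that realizing Morse$^{+}$ in dimension $3$ constrains $\tb_{\Sigma}(\Lambda) \leq 0$, a genuine numerical obstruction, whereas for $\dim \Lambda \geq 2$ there is enough codimension to cancel every bad trajectory by a generic perturbation supported away from $\Lambda$.

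The main obstacle lies in the local step: simultaneously matching the given $1$-framing class, placing the contact points of $\Sigma_{\xi}$ on $\Lambda$ at the critical points of $g$, and achieving non-degeneracy of the linearization in both tangent and normal directions. Writing down an explicit normal form requires some care, and one has to check separately that the index of each such critical point as a zero of $\Sigma_{\xi}$ splits as the index of the corresponding critical point of $g$ on $\Lambda$ plus a fixed transverse contribution, so that the indices on $\Sigma$ and on $\Lambda$ are compatible with \autoref{defn:regular legendrian}.
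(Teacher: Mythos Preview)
Your local strategy---writing $\Sigma$ as a graph $\{z=f(q,p)\}$ in $J^1(\Lambda)$, forcing $\partial_p f(q,0)=\epsilon\nabla g(q)$, then extending globally by the relative \autoref{thm:morse is generic}---is a reasonable route to the first half of the statement, and your observation that tangency of $\Sigma_\xi$ to $\Lambda$ is automatic is correct and clean. This is a different packaging from the paper's proof, which instead builds an auxiliary contact form $\alpha_\Lambda$ handle-by-handle on $J^1(\Lambda)$ so that $\Sigma_{\xi_\Lambda}$ equals a prescribed Morse extension $\bar v$ of $v$, and then invokes the Legendrian neighborhood theorem. Both approaches accomplish the same local installation; the paper's has the advantage that the $\Sigma$-index and sign of each critical point on $\Lambda$ are read off directly from the explicit normal forms.

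However, your Morse$^+$ argument has a genuine gap. You propose to upgrade to Morse$^+$ by a $C^\infty$-perturbation \emph{supported away from $\Lambda$}, claiming that in dimension $\geq 5$ there is ``enough codimension to cancel every bad trajectory.'' But the bad trajectories you need to worry about may lie \emph{inside} $\Lambda$: once you have fixed $g$ and the transverse jet, each critical point $p\in\Lambda$ acquires a definite sign as a $\Sigma$-critical point, and a flow line of $\Sigma_\xi|_\Lambda=\nabla g$ from a negative $p$ to a positive $q$ lies entirely in $\Lambda$ and cannot be destroyed by any perturbation relative to $\Lambda$. This is not a codimension issue at all; it is exactly the mechanism behind the $3$-dimensional obstruction $\tb_\Sigma(\Lambda)\leq 0$, and it persists verbatim in higher dimensions.

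What actually makes the Morse$^+$ upgrade work when $\dim\Lambda\geq 2$ is a framing computation you have not done. The paper shows (\autoref{eqn:compute 1-framing}) that the $1$-framing $\sigma_\Sigma(\Lambda)$ equals $\sum_{x_i\text{ negative}}(-1)^{k_i}$, and hence, when the extension $\bar v$ is Morse$^+$, equals $(-1)^n\chi(R_-(\Lambda))$. The point is that for $\dim\Lambda\geq 2$ one can \emph{choose} the Morse function $v$ on $\Lambda$ and the sign assignment (equivalently the normal indices) so that the signed arrangement on $\Lambda$ is already Morse$^+$ \emph{and} $\chi(R_-(\Lambda))$ realizes the prescribed integer $\sigma_\Sigma(\Lambda)$; in dimension $3$ this fails precisely because $R_-(\Lambda)$ is a union of arcs with nonnegative Euler characteristic. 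So the dimensional hypothesis enters not through a generic-transversality count away from $\Lambda$, but through the freedom to prescribe $\chi(R_-(\Lambda))$ when $\dim\Lambda\geq 2$. Your plan needs this ingredient in place of the ``perturb away from $\Lambda$'' step.
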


\begin{proof}
If $\dim M=3$, then given any $\sigma \in \Zbb$, one can construct by hand an annulus $\Sigma \supset \Lambda$ corresponding to the framing $\sigma$, with respect to which $\Lambda$ is regular. However, $\Sigma$ can be made Morse$^+$ only when $\tb_{\sigma}(\Lambda) \leq 0$. Assume $\dim M=2n+1 \geq 5$ for the rest of the proof. 
	
By the local nature of the Proposition, we can assume w.l.o.g. that as a smooth manifold $M = J^1(\Lambda)$, $\Lambda \subset M$ is the $0$-section, and $D^n \to \Sigma \to \Lambda$ is a (not necessarily trivial) disk bundle. The idea is to construct a contact structure $\xi_{\Lambda}$ on $M$ such that $\Lambda$ is Legendrian with respect to $\xi_{\Lambda}$, and $\Sigma$ satisfies all the properties of the Proposition. Then we argue that $\xi_{\Lambda}$ is isomorphic to $\xi$ by the Legendrian neighborhood theorem. For clarity, the proof is divided into three steps.
	
\s\n
\textsc{Step 1.} \emph{Construct a Morse vector field on $M$.}
	
\s
	
Let $v$ be a Morse vector field on $\Lambda$ with critical points $\xbf \coloneqq \{x_1,\dots,x_m\}$. Define a partial order on the set $\xbf$ by requiring $x_i \prec x_j$ if and only if there exists a flow line of $v$ from $x_i$ to $x_j$. Assume w.l.o.g. that if $x_i \prec x_j$, then $i<j$. Let $k_i$ be the Morse index of $x_i$. Clearly $k_1=0$ and $k_m=n$, but $k_i$ is not necessarily smaller than $k_j$ when $i<j$. Note that $v$ naturally induces a handle decomposition of $\Lambda$ such that each $x_i$ corresponds to a $k_i$-handle.
	
Extend $v$ to a Morse vector field $\bar{v}$ on $\Sigma$ such that the critical points of $v$ and $\bar{v}$ coincide. Such extension is by no means unique. For purely notational purposes, let us write $\bar{\xbf} \coloneqq \{ \bar{x}_1,\dots,\bar{x}_m \}$ for the set of critical points of $\bar{v}$ such that $\bar{x}_i=x_i$ for all $1 \leq i \leq m$. Let $\bar{k}_i$ be the Morse index of $\bar{x}_i$. Then clearly $k_i \leq \bar{k}_i$ for all $i$. We assign signs to each element in $\bar{\xbf}$ such that $\bar{x}_i$ is positive if $\bar{k}_i \leq n$ and $\bar{x}_i$ is negative if $\bar{k}_i \geq n$. Such sign assignment may not be unique, i.e., critical points of index $n$ can be either positive or negative.
	
\s\n
\textsc{Step 2.} \emph{Construct the contact form $\alpha_{\Lambda}$.}
	
\s
	
So far the discussion is purely topological. Now we describe how to construct the desired contact structure on $M$ using $\bar{v}$. Around any point $x \in \Sigma$, we can choose local coordinates $(z,\pbf,\qbf) \in \open(x) \subset M$, where $\pbf = (p_1,\dots,p_n), \qbf=(q_1,\dots,q_n)$, such that $\Sigma \cap \open(x) = \{z=0\}$ and $\Lambda \cap \open(x) = \{ z=\pbf=0 \}$. Throughout this paper, $\open$ denotes an unspecified small open neighborhood.
	
We construct a contact form $\alpha_{\Lambda}$ on $M$ as follows. First, we construct $\alpha_{\Lambda}$ near each $\bar{x}_i, 1 \leq i \leq m$. If $\bar{x}_i$ is positive, then $\bar{k}_i \leq n$ and we define
\begin{equation*}
\alpha_{\Lambda}|_{\open(\bar{x}_i)} = dz - q_1dp_1 - \dots - q_{\bar{k}_i}dp_{\bar{k}_i} + \dots + q_ndp_n - 2\pbf \cdot d\qbf,
\end{equation*}
where $\pbf \cdot d\qbf \coloneqq p_1dq_1 + \dots + p_ndq_n$. If $\bar{x}_i$ is negative, then $\bar{k}_i \geq n$ and $k_i \geq \bar{k}'_i$ where $\bar{k}'_i \coloneqq \bar{k}_i-n$. Define
\begin{equation*}
\alpha_{\Lambda}|_{\open(\bar{x}_i)} = -dz - 2\qbf \cdot d\pbf - p_1dq_1 - \dots - p_{\bar{k}'_i}dq_{\bar{k}'_i} + \dots + p_ndq_n.
\end{equation*}
Moreover, in both cases, the coordinates are chosen so that $\Lambda \cap \open(\bar{x}_i)$ is contained in $\{ z=q_{k_i+1}=\dots=q_n=p_1=\dots=p_{k_i}=0 \}$.
	
Following the arguments in \cite[Proposition 2.2.3]{HH19} and \cite[\S 9.2]{HH19}, one can extend $\alpha_{\Lambda}$ to $M$ such that $\Lambda$ is Legendrian with respect to $\xi_{\Lambda} \coloneqq \ker\alpha_{\Lambda}$, and the characteristic foliation $\Sigma_{\xi_{\Lambda}} = \bar{v}$. In other words, $\Lambda \subset \Sigma$ is a regular Legendrian with respect to $\xi_{\Lambda}$. By Legendrian neighborhood theorem, shrinking the neighborhood size of $\Lambda$ if necessary, we can assume $\xi$ is contactomorphic to $\xi_{\Lambda}$.
	
\s\n
\textsc{Step 3.} \emph{Compute the $1$-framing.}
	
\s
	
To complete the proof of the Proposition, it remains to compute the 1-framing $\sigma_{\Sigma}(\Lambda)$ of $\Lambda$ induced by $\Sigma$, which, in turn, is determined by $v$ together with the sign assignment. Using the same trick as in the proof of \autoref{lem:set of 1-framings}, we compute that 
\begin{equation} \label{eqn:compute 1-framing}
\sigma_{\Sigma}(\Lambda) = \sum_{x_i \text{ negative}} (-1)^{k_i}
\end{equation}
where sum is taken over all the negative singularities $x_i \in \Lambda$. In particular $\sigma_{\Sigma}(\Lambda)$ is independent of the extension $\bar{v}$.	
	
Assume $\bar{v}$ is Morse$^+$. Then there exists a (possibly disconnected) codimension-$1$ submanifold $\Gamma_{\Lambda} \subset \Lambda$, called the \emph{Legendrian divide}, which satisfies the following conditions:
\begin{itemize}
	\item[(LD1)] $\Gamma_{\Lambda}$ is everywhere transverse to $v$;
	\item[(LD2)] There exists the decomposition $\Lambda \setminus \Gamma_{\Lambda} = R_+(\Lambda) \cup R_-(\Lambda)$ such that the positive (resp. negative) critical points of $v$ are contained in $R_+(\Lambda)$ (resp. $R_-(\Lambda)$).
	\item[(LD3)] Near each component of $\Gamma_{\Lambda}$, $v$ flows from $R_+(\Lambda)$ to $R_-(\Lambda)$.
\end{itemize}
In this case, the $1$-framing induced by $\Sigma$ can be computed by
\begin{equation*}
\sigma_{\Sigma}(\Lambda) = (-1)^n \chi(R_-(\Lambda))
\end{equation*}
In particular, we recover \autoref{eqn:tb computation} by setting $n=1$ and observing that $R_-(\Lambda)$ is a disjoint union of intervals.
	
Since $\dim \Lambda \geq 2$ by assumption, one can choose $v,\bar{v}$ and sign assignment such that $\Sigma$ is Morse$^+$ and $\chi(R_-(\Lambda))$ takes any prescribed integer value.
\end{proof}

The following is an easy computation.

\begin{lemma} \label{lem:Leg self intersection number}
	Given a regular Legendrian $\Lambda$ in a Morse$^+$ hypersurface $\Sigma$, the self-intersection number of $\Lambda$ in $\Sigma$ is $\chi(R_+(\Lambda)) - \chi(R_-(\Lambda))$.
\end{lemma}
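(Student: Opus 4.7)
The self-intersection number $[\Lambda] \cdot [\Lambda]_\Sigma$ equals the Euler number of the normal bundle $N_{\Lambda/\Sigma}$, which I compute Morse-theoretically by exhibiting a section of $N_{\Lambda/\Sigma}$ whose zeros are precisely the critical points of $v \coloneqq \Sigma_\xi|_\Lambda$. The strategy parallels the $1$-framing calculation at the end of the proof of \autoref{prop:regular Leg}.

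Near each critical point $\bar x_i$, I use the local contact coordinates from Step 2 of the proof of \autoref{prop:regular Leg}: $\Lambda$ appears as the linear subspace with tangent coordinates $(q_1,\dots,q_{k_i},p_{k_i+1},\dots,p_n)$, and $N_{\Lambda/\Sigma}$ is locally trivialized by $(\partial_{q_{k_i+1}},\dots,\partial_{q_n},\partial_{p_1},\dots,\partial_{p_{k_i}})$. I define a local section by pairing each tangent coordinate with its complementary normal direction,
\[
s|_{\open(\bar x_i)} \;=\; \sum_{j > k_i} p_j\, \partial_{q_j} \;+\; \sum_{j \leq k_i} q_j\, \partial_{p_j},
\]
which has a single nondegenerate zero at $\bar x_i$. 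Since $v$ is nonvanishing on $\Lambda\setminus\{x_i\}$, these local models can be patched by a partition of unity into a global section of $N_{\Lambda/\Sigma}$ without creating new zeros.

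The sign of each zero is then computed relative to the orientations of $\Lambda$ and $N_{\Lambda/\Sigma}$ induced by the ambient contact orientation $\alpha_\Lambda \wedge (d\alpha_\Lambda)^n > 0$. At a positive critical point ($\bar k_i \leq n$), where $\alpha_\Lambda$ carries the leading $+dz$, a direct determinant calculation yields sign $(-1)^{k_i}$. At a negative critical point ($\bar k_i \geq n$), the leading $-dz$ in the local model reverses the Reeb direction and hence the co-orientation of $\Sigma$, producing an extra $-1$; the contribution becomes $-(-1)^{n-k_i}$.

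Summing over all critical points gives
\[
[\Lambda]\cdot[\Lambda]_\Sigma \;=\; \sum_{\text{pos. }\bar x_i}(-1)^{k_i} \;-\; \sum_{\text{neg. }\bar x_i}(-1)^{n-k_i}.
\]
Applying Poincar\'e--Hopf to $v$ on $R_+(\Lambda)$---where $v$ points outward across $\Gamma_\Lambda$ by condition (LD3)---identifies the first sum as $\chi(R_+(\Lambda))$. Applying Poincar\'e--Hopf to $-v$ on $R_-(\Lambda)$---which points outward there, and whose zeros (the negative critical points of $v$) have Morse index $n - k_i$---identifies the second sum as $\chi(R_-(\Lambda))$. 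The only delicate step is the sign bookkeeping at negative critical points; the rest is a routine matrix computation.
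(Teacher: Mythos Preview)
The paper offers no proof beyond the phrase ``an easy computation,'' so there is no argument to compare against; your write-up is already more detailed than the paper. The overall strategy---identify $[\Lambda]\cdot[\Lambda]_\Sigma$ with $e(N_{\Lambda/\Sigma})$ and compute it by counting zeros of a section supported at the critical points of $v$---is correct, and the final Poincar\'e--Hopf bookkeeping is fine.

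There is, however, a genuine gap in the middle. You construct the section $s$ only on small neighborhoods $\open(\bar x_i)$ and then assert that ``since $v$ is nonvanishing on $\Lambda\setminus\{x_i\}$, these local models can be patched by a partition of unity into a global section of $N_{\Lambda/\Sigma}$ without creating new zeros.'' This does not follow: $v$ is a section of $T\Lambda$, not of $N_{\Lambda/\Sigma}$, so its nonvanishing gives you nothing to patch with on the complement of the $\open(\bar x_i)$; and a partition-of-unity combination of sections that are individually nonvanishing can certainly vanish. Without a global nonvanishing extension away from the critical points, you cannot conclude that the signed count over the $\bar x_i$ computes the Euler number.

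The clean fix is to exhibit a \emph{global} section with zeros exactly at the $\bar x_i$. The contact form does this for you: since $\Lambda$ is Legendrian, $\alpha|_{T\Lambda}\equiv 0$, so $\alpha|_\Sigma$ restricted along $\Lambda$ lands in the conormal bundle $N^*_{\Lambda/\Sigma}=\mathrm{Ann}(T\Lambda)\subset T^*\Sigma|_\Lambda$. This section vanishes precisely where $T\Sigma=\xi$, i.e., at the critical points $\bar x_i$, and in the local models of Step~2 of the proof of \autoref{prop:regular Leg} it is (up to positive rescaling) exactly the dual of your $s$. With this replacement the zero set is as you claim, and your sign and Poincar\'e--Hopf analysis go through unchanged. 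Equivalently, one may note that on each Weinstein piece $R_\pm$ the symplectic form $\pm d\alpha$ gives $N_{\Lambda/\Sigma}|_{R_\pm(\Lambda)}\cong T^*R_\pm(\Lambda)$, so the contributions are $\chi(R_+(\Lambda))$ and $-\chi(R_-(\Lambda))$; this is presumably the ``easy computation'' the paper has in mind.
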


In particular, we say a regular Legendrian $\Lambda$ is \emph{balanced} if $\chi(R_+(\Lambda)) = \chi(R_-(\Lambda))$.



\section{Regular coLegendrians in dimension $5$} \label{sec:regular coLeg}
In this section, we study the Morse-theoretic structures of regular coLegendrians $Y \subset (M,\xi)$ introduced in \autoref{defn:coLeg submfd}. If $Y$ is a smooth submanifold, then it follows from \cite{Hua15} that $Y$ is naturally equipped with a (singular) Legendrian foliation $\Fcal \coloneqq \ker\alpha|_Y$, where $\alpha$ is a contact form. Conversely, the Legendrian foliation determines the germ of the contact structure near $Y$. However, smooth coLegendrians are not only difficult to find in general but also not sufficient for studying Legendrian isotopies. It turns out that the appropriate class of coLegendrians to study in this context contain certain ``cone-type'' singularities, which we will explain in details in this section.

\emph{For the rest of this section, we will assume $\dim M=5$ and $\dim Y=3$.} One major advantage in this dimension is the following obvious fact, which fails in higher dimensions.

\begin{lemma} \label{lem:coiso tangent to char fol}
A smooth $3$-submanifold $Y \subset (M,\xi)$ contained in a hypersurface $\Sigma$ is coLegendrian if it is tangent to $\Sigma_{\xi}$.
\end{lemma}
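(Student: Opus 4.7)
The plan is to verify the coisotropic condition pointwise via linear algebra inside the 4-dimensional conformal symplectic vector space $\xi(x)$. Fix $x \in Y$, write $V_x := T_x Y \cap \xi(x)$, and let $\omega$ denote a local representative of the conformal symplectic form on $\xi$. Since $T_x Y$ is 3-dimensional and sits inside the hyperplane $T_x\Sigma \subset T_x M$, the intersection $V_x$ has dimension $2$ or $3$; the goal is to check $V_x^\omega \subset V_x$ in each case.

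The case $\dim V_x = 3$ (equivalently $T_x Y \subset \xi(x)$) is immediate, since every hyperplane in a symplectic vector space is coisotropic. In particular this already disposes of every singular point of $\Sigma_\xi$, where $T_x \Sigma = \xi(x)$ forces $T_x Y \subset \xi(x)$, so the tangency hypothesis---vacuous at such points---need not be invoked there. The interesting case is $\dim V_x = 2$, which forces $x$ to be a regular point of the characteristic foliation. At such $x$ the hyperplane $W_x := T_x\Sigma \cap \xi(x)$ is 3-dimensional and the characteristic line is $L_x := W_x^{\omega} \subset W_x$. Because a 2-dimensional subspace of a 4-dimensional symplectic space is coisotropic if and only if it is Lagrangian, the task reduces to showing $\omega|_{V_x} \equiv 0$.

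The tangency hypothesis places $L_x$ inside $T_x Y$, and since $L_x \subset \xi(x)$ automatically, this gives $L_x \subset V_x$. Choosing a basis $\{l, v'\}$ of $V_x$ with $l \in L_x$, the pairing $\omega(l, v')$ vanishes because $l \in W_x^\omega$ and $v' \in V_x \subset W_x$, while $\omega(l,l) = \omega(v',v') = 0$ by antisymmetry; hence $\omega|_{V_x} \equiv 0$, which finishes the proof. The argument is essentially trivial linear algebra, and no real obstacle arises; the only subtle point worth highlighting is the bookkeeping at singular points of $\Sigma_\xi$, where ``tangent to $\Sigma_\xi$'' must be read as vacuous, but this is absorbed into the easy 3-dimensional case.
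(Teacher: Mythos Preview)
Your proof is correct and follows essentially the same approach as the paper: split into the cases $\dim V_x = 3$ (hyperplane, automatically coisotropic) and $\dim V_x = 2$ (Lagrangian because it contains the characteristic line $\Sigma_\xi(x)$). The paper's argument is terser---it simply asserts the Lagrangian conclusion in the second case---whereas you spell out the linear algebra via $L_x = W_x^\omega$ and also record explicitly that singular points of $\Sigma_\xi$ fall into the easy first case; both are reasonable elaborations, not a different route.
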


\begin{proof}
For any $x \in Y$, either $T_x Y \subset \xi_x$ or $T_x Y \cap \xi_x \subset \xi_x$ is $2$-dimensional. In the former case $Y$ is clearly coisotropic at $x$. In the latter case $T_x Y \cap \xi_x \subset \xi_x$ is a Lagrangian subspace since $\Sigma_{\xi}(x) \in T_x Y \cap \xi_x$ by assumption.
\end{proof}

\begin{remark}
Since we will discuss submanifolds $Y$ which are not everywhere smooth, we say $Y$ is tangent to a vector field $v$ if for any $x \in Y$, the flow line passing through $x$ is completely contained in $Y$. Under this convention, \autoref{lem:coiso tangent to char fol} can be generalized to non-smooth $Y$ and asserts that $Y$ is coLegendrian on the smooth part.
\end{remark}

The definition of \emph{regular coLegendrians} is completely parallel to the definition of regular Legendrians in \autoref{defn:regular legendrian}. Namely, with respect to a Morse hypersurface $\Sigma$ containing $Y$, we say $Y$ is \emph{regular} if it is tangent to $\Sigma_{\xi}$ and the restricted critical points of $\Sigma_{\xi}|_Y$ on $Y$ are nondegenerate. Note that the normal bundle of a regular coLegendrian is necessarily trivial since it is contained in a hypersurface by definition.

The section is organized as follows. In \autoref{subsec:coLeg handles}, we study models of coLegendrian handles which can be used to build any regular coLegendrian. In \autoref{subsec:coLeg existence}, we establish the existence of coLegendrians in the closed case. Then case of coLegendrians with Legendrian boundary is dealt with in \autoref{subsec:coLeg with bdry}.

\subsection{CoLegendrian handles} \label{subsec:coLeg handles}
Suppose $Y \subset \Sigma$ is a regular coLegendrian. It turns out that the (Morse) vector field $\Sigma_{\xi}|_Y$ itself is insufficient to determine the contact germ near $Y$. Indeed, it is the (singular) Legendrian foliation $\Fcal$ on $Y$, which determine the contact germ by \cite{Hua15}. The goal of this subsection is to work out local models of $\Fcal$ in the handles given by $\Sigma_{\xi}|_Y$.

\s\n
\textsc{Notation:}
\textit{Suppose $p \in \Sigma$ is a critical point of $\Sigma_{\xi}$. The Morse index of $p$ is called the \emph{$\Sigma$-index}. If, in addition, $p \in Y$, then the Morse index of $\Sigma_{\xi}|_Y$ at $p$ is called the \emph{$Y$-index}. This terminology extends to other regular submanifolds, e.g., Legendrians, in $\Sigma$ in the obvious way.}

\s
In the following, we will study \emph{coLegendrian handles}, i.e., the handles in $Y$ determined by $\Sigma_{\xi}|_Y$, and the associated Legendrian foliations $\Fcal$ in detail.

\s
\subsubsection{CoLegendrian handle $H_0$ of $Y$-index $0$} \label{subsubsec:H_0}

Let $p_0 \in H_0$ be the critical point. Then the $\Sigma$-index $\ind_{\Sigma} (p_0)=0$ or $1$. Denote the handle in $\Sigma$ corresponding to $p_0$ by $\widetilde{H}_0$, which is either a $0$-handle or a $1$-handle. Moreover, write $\p \widetilde{H}_0 = \p_+ \widetilde{H}_0 \cup \p_- \widetilde{H}_0$, where $\Sigma_{\xi}$ is inward-pointing along $\p_- \widetilde{H}_0$ and outward-pointing along $\p_+ \widetilde{H}_0$. Similarly, one can write $\p H_0 = \p_+ H_0 \cup \p_- H_0$ such that $\p_{\pm} H_0 \subset \p_{\pm} \widetilde{H}_0$, respectively, although $\p_- H_0 = \varnothing$ in this case. Note that $\p_{\pm} \widetilde{H}_0$ are naturally contact $3$-manifolds and $\p_+ H_0 \subset \p_+ \widetilde{H}_0$ is a $2$-sphere.

Observe that $p_0$ is necessarily positive. In what follows, we will always identify the characteristic foliation with the Liouville vector field for positive critical points, and the negative Liouville vector field for negative critical points.

\s\n
\textsc{Case 1.} $\ind_{\Sigma} (p_0)=1$.

\s
Identify $\widetilde{H}_0 \cong B^1 \times B^3$ such that  the Liouville vector field can be written as
\begin{equation} \label{eqn:index 1 vf}
X_1 \coloneqq -x_1\p_{x_1} + 2y_1\p_{y_1} + \tfrac{1}{2} (x_2\p_{x_2}+y_2\p_{y_2}),
\end{equation}
where $x_1 \in B^1$ and $(y_1,x_2,y_2) \in B^3$, and the Liouville form on $\widetilde{H}_0$, i.e., the restricted contact form, is
\begin{equation} \label{eqn:index 1 Liouville form}
\lambda_1 \coloneqq \alpha|_{\widetilde{H}_0} = -x_1 dy_1-2y_1 dx_1 + \tfrac{1}{2} (x_2 dy_2 - y_2 dx_2).
\end{equation}
Under this identification, we have $H_0 \cong \{0\} \times B^3$ is the unstable manifold of $p_0$, which is of course smooth. Moreover, the Legendrian foliation $\Fcal_{H_0}$ on  $H_0$ is defined by
\begin{equation} \label{eqn:Leg foliation on H0}
\Fcal_{H_0} = \ker(\lambda_1|_{H_0}) = \ker(x_2 dy_2 - y_2 dx_2).
\end{equation}
It follows that the characteristic foliation on $\p H_0$, which is nothing but $\Fcal_{H_0} \cap \p H_0$, is standard, i.e., there are one source and one sink and all flow lines travel from the source to the sink.

\begin{remark}
	The particular choice of the Liouville vector field in \autoref{eqn:index 1 vf} (and the Liouville form) is somewhat arbitrary. Two different choices of such Liouville forms differ by an exact $1$-form, and we say the different choices are \emph{deformation equivalent}. Note that deformation equivalence is strictly weaker than (symplectic) isotopy. This remark applies to all the particular choices of Liouville forms in subsequent models.
\end{remark}

\s\n
\textsc{Case 2.} $\ind_{\Sigma} (p_0)=0$.

\s
Identify $\widetilde{H}_0 \cong B^4$ such that the Liouville vector field can be written as
\begin{equation*}
X_0 \coloneqq \tfrac{1}{2} (x_1\p_{x_1} + y_1\p_{y_1} + x_2\p_{x_2} + y_2\p_{y_2}),
\end{equation*}
and the Liouville form
\begin{equation} \label{eqn:index 0 Liouville form}
\lambda_0 \coloneqq \alpha|_{\widetilde{H}_0} = \tfrac{1}{2} (x_1 dy_1 - y_1 dx_1 + x_2 dy_2 - y_2 dx_2).
\end{equation}

Observe that $(\p \widetilde{H}_0, \lambda_0|_{\p \widetilde{H}_0}) \cong (S^3,\xi_{\std})$ and $\p H_0=\p_+ H_0 \subset \p \widetilde{H}_0$ can be identified with a $2$-sphere in the standard contact $S^3$. In particular $\xi_{\std}$ induces a characteristic foliation $(\p H_0)_{\xi_{\std}}$ on $\p H_0$. It follows that $H_0$ is the cone over $\p H_0$ and the Legendrian foliation $\Fcal_{H_0}$ is also the cone over $(\p H_0)_{\xi_{\std}}$. Namely, a leaf of $\Fcal_{H_0}$ is the cone over a leaf of $(\p H_0)_{\xi_{\std}}$. \emph{Hereafter all cones are taken with respect to appropriate Liouville vector fields, which is $X_0$ in this case.} Note that, in this case, $H_0$ is smooth only when $\p H_0$ is equatorial.

\s
\subsubsection{Positive coLegendrian handle $H^+_1$ of $Y$-index $1$} \label{subsubsec:poitive H_1}

Let $p_1 \in H^+_1$ be the critical point. Then $\ind_{\Sigma} (p_1) = 1$ or $2$. We continue using the terminologies from \autoref{subsubsec:H_0} to denote the corresponding handle in $\Sigma$ by $\widetilde{H}^+_1$. 

\s\n
\textsc{Case 1.} $\ind_{\Sigma} (p_1)=2$.

\s
Identify $\widetilde{H}^+_1 \cong B^2 \times B^2$ such that the Liouville vector field can be written as
\begin{equation} \label{eqn:index 2 vf}
X_2 \coloneqq -x_1\p_{x_1}-x_2\p_{x_2}+2y_1\p_{y_1}+2y_2\p_{y_2},
\end{equation} 
and the Liouville form
\begin{equation} \label{eqn:index 2 Liouville form}
\lambda_2 \coloneqq \alpha|_{\widetilde{H}^+_1} = -x_1 dy_1 - x_2 dy_2 - 2y_1 dx_1 - 2y_2 dx_2,
\end{equation}
where $(x_1,x_2) \in B^2$ in the first component and $(y_1,y_2) \in B^2$ in the second component.

To see the embedding $H^+_1 \subset \widetilde{H}^+_1$, observe that the unstable disk in $H^+_1$ coincides with the unstable disk in $\widetilde{H}^+_1$ for index reasons. On the other hand, the $1$-dimensional stable disk in $H^+_1$ sits in the stable disk $B^2_x \coloneqq B^2 \times \{0\}$ in $\widetilde{H}^+_1$, and is tangent to the restricted Liouville vector field
\begin{equation*}
X_2|_{B^2_x} = -x_1\p_{x_1}-x_2\p_{x_2}.
\end{equation*}
In other words, $H^+_1 = \delta \times B^2 \subset \widetilde{H}^+_1$ where $\delta \subset B^2_x$ is the union of two (different) radii. In general $H^+_1$ has corners along $\{0\} \times B^2$, and is smooth precisely when $\delta$ is a diameter.

Next we turn to the Legendrian foliation $\Fcal_{H^+_1}$ on $H^+_1$. Suppose $\overline{\delta} \subset \delta$ is one of the two radii and suppose further w.l.o.g. that $\overline{\delta}=\{x_1 \geq 0, x_2=0\} \subset B^2_x$. It suffices to understand the Legendrian foliation $\Fcal_{H^+_1}|_{\overline{\delta} \times B^2}$ on one half of $H^+_1$, which is given by
\begin{equation*}
\Fcal_{H^+_1}|_{\overline{\delta} \times B^2} = \ker(\lambda_2|_{\overline{\delta} \times B^2}) = \ker(x_1 dy_1+2y_1 dx_1).
\end{equation*}
In particular, the characteristic foliation on each (disk) component of $\p_- H^+_1$ is a linear foliation, and on $\p_+ H^+_1$, which is an annulus with corners along $\{0\} \times \p B^2$, it is as shown in \autoref{fig:cornered char foliation}. In particular, observe that there are four half-saddle points on $\p_+ H^+_1$, all of which lie on $\{0\} \times \p B^2$. Let $p_1,p_2$ be the two half-saddles on $\p_+ (\overline{\delta} \times B^2)$, and $q_1,q_2$ be the other two half-saddles. Then the relative positions between $p_1$ and $p_2$, as well as between $q_1$ and $q_2$, are fixed. However, the relative position between $p_1$ and $q_1$ (or equivalently, $p_2$ and $q_2$) depends on the angle of $\delta$ at the origin. In particular, if the angle is $\pi$, i.e., $\delta$ is a diameter, then $p_1$ (resp. $p_2$) collides with $q_1$ (resp. $q_2$), and the characteristic foliation on the smooth $\p_+ H^+_1$ possesses two (full) saddles. Finally, if we fix an orientation of $\p_+ H^+_1$, then $p_1$ and $p_2$ (resp. $q_1$ and $q_2$) always have opposite signs, and the characteristic foliation is oriented in such a way that on $\{0\} \times B^2$, it flows from the positive half-saddle to the negative half-saddle.

\begin{figure}[ht]
	\begin{overpic}[scale=.3]{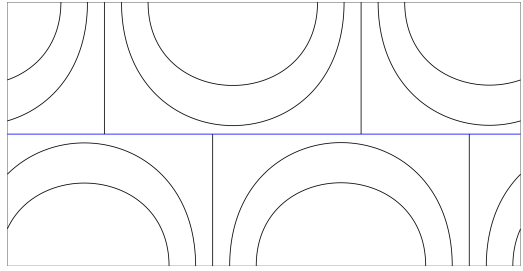}
	\end{overpic}
	\caption{The characteristic foliation on $\p_+ H^+_1$. The left and right sides are identified and the corners are along the blue circle.}
	\label{fig:cornered char foliation}
\end{figure}

\s\n
\textsc{Case 2.} $\ind_{\Sigma} (p_1)=1$.

\s
Identify $\widetilde{H}^+_1 \cong B^1 \times B^3$ such that the Liouville vector field and the Liouville form are given by \autoref{eqn:index 1 vf} and \autoref{eqn:index 1 Liouville form}, respectively. Continue using the notations from \autoref{subsubsec:H_0}, we write $B^3_0 \coloneqq \{0\} \times B^3$. Then the embedding $H^+_1 \subset \widetilde{H}^+_1$ takes the form $H^+_1 = B^1 \times K(\gamma)$, where $\gamma \subset \p B^3_0$ is a closed loop and $K(\gamma) \subset B^3_0$ is the cone over $\gamma$ taken with respect to the vector field
\begin{equation} \label{eqn:skewed radial vf}
X_1|_{B^3_0} = 2y_1 \p_{y_1} + \tfrac{1}{2} (x_2 \p_{x_2}+y_2 \p_{y_2}).
\end{equation}

Assume $\gamma$ is smooth and \emph{generic} in the following sense. Consider the foliation $\Gcal$ on $B^3_0$ defined by
\begin{equation} \label{eqn:char fol on S2}
\Gcal \coloneqq \ker(\lambda_1|_{B^3_0}) = \ker(x_2dy_2-y_2dx_2) = \ker (r^2d\theta),
\end{equation}
where $(r,\theta)$ denotes the polar coordinates on the $x_2y_2$-plane. It induces a foliation $\Gcal|_{\p B^3_0} \coloneqq \Gcal \cap \p B^3_0$ which is singular at the north pole $(1,0,0)$ and the south pole $(-1,0,0)$. We say $\gamma$ is \emph{generic} if the following hold:
\begin{itemize}
	\item[(Gen1)] $\gamma$ does not pass through the north and the south poles.
	\item[(Gen2)] The intersection between $\gamma$ and a leaf of $\Gcal|_{\p B^3_0}$ is either transversal or quadratic tangential, i.e., modeled on the intersection between the $u$-axis in $\Rbb^2_{u,v}$ and the graph of $v=u^2$. In particular, the tangential points are isolated.
	\item[(Gen3)] None of the quadratic tangential points lie on the equator $\p B^3_0 \cap \{y_1=0\}$.
\end{itemize}

To understand the Legendrian foliation $\Fcal_{H^+_1}$ on $H^+_1$, it turns out to be convenient to zoom in on a small neighborhood of the critical point $p_1 \in \widetilde{H}^+_1$. Motivated by this, let $R_1,R_2$ be the radii of $B^1,B^3$, respectively. More explicitly, $B^1=\{\abs{x_1} \leq R_1\}$ and $B^3=\{\abs{y_1}^2+r^2 \leq R_2^2\}$. Before getting into the details, let's briefly explain the strategy to visualize $\Fcal$ as follows. In all previous cases, we first describe the Legendrian foliation $\Fcal$, which is defined by a relatively simple $1$-form, on the relevant handle, say, $H$, and then examine its trace on the boundaries $\p_{\pm} H$. In this case, however, the above procedure will be reversed due to the more complicated structure of $\Fcal_{H^+_1}$. Namely, we will first describe the trace of $\Fcal_{H^+_1}$ on $\p_{\pm} H^+_1$, i.e., the characteristic foliations, and then use it to describe $\Fcal_{H^+_1}$. 

Let $w_1,\dots,w_m \in \gamma$ be the quadratic tangential points introduced in (Gen2). We first analyze the characteristic foliation on $\p_+ H^+_1 = B^1 \times \gamma$, which is the easier part. Fix a orientation of $\gamma$ and let $\dot{\gamma}$ be the positive tangent vector. Identify $\gamma$ with $\{0\} \times \gamma$. If we denote the restriction of the vector field $\p_{x_1}$ on $\p_+ H^+_1$ along $\gamma$ by $\p_{x_1}|_{\gamma}$, then observe that $\lambda_1 (\p_{x_1}|_{\gamma}) = -2y_1(\gamma)$, where $y_1(\gamma)$ denotes the $y_1$-coordinate of the points on $\gamma$. Together with (Gen3), we see that the characteristic foliation on $\p_+ H^+_1$ is nonsingular for $R_1$ sufficiently small and is as shown in \autoref{fig:char fol positive bdry}, where the tangencies between $\gamma$ and the characteristic foliation are in one-to-one correspondence with the $w_i$'s.

\begin{figure}[ht]
	\begin{overpic}[scale=.3]{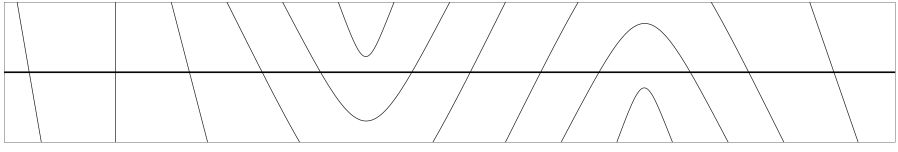}
	\end{overpic}
	\caption{The characteristic foliation on $\p_+ H^+_1$. The left and right sides are identified.}
	\label{fig:char fol positive bdry}
\end{figure}

Next, we turn to the characteristic foliation on $\p_- H^+_1$, which consists of two disks. In what follows we consider the component of $\p_- H^+_1$ with $x_1=R_1>0$. The other component with $x_1=-R_1$ can be analyzed similarly. Cut $\gamma$ open at the $w_i$'s to obtain $m$ consecutive open segments $\gamma_1,\dots,\gamma_m$ such that $\gamma_i$ denotes the segment between $w_i$ and $w_{i+1}$, where $1 \leq i \leq m$ and $m+1$ is identified with $1$. Let $K(\gamma_i)$ be the cone over $\gamma_i$. For definiteness, let's consider $K(\gamma_1)$ and suppose for simplicity that the span of $\theta(\gamma_1)$ is less than $2\pi$, where $\theta(\gamma_1)$ denotes the $\theta$-coordinate of the points on $\gamma_1$. The general case can be dealt with similarly. Let $\overline{\gamma}_1$ be the projection of $\gamma_1$ to the $x_2y_2$-plane, and $K(\gamma_1)$ be the cone over $\gamma_1$, taken with respect to the radial vector field $r\p_r$. Then $K(\overline{\gamma}_1) \subset \Rbb^2_{x_2,y_2}$ is an embedded sector, over which $K(\gamma_1)$ is graphical and can be written as
\begin{equation} \label{eqn:cone gamma_1}
K(\gamma_1) = \{ y_1=f(\theta)r^4 ~|~ (r,\theta) \in K(\overline{\gamma}_1) \},\footnote{The term $r^4$ comes from the particular choice of the Liouville vector field in \autoref{eqn:index 1 vf}.}
\end{equation}
such that $f'(\theta)$ blows up as $\theta$ approaches $\theta_{\min}$ or $\theta_{\max}$, where $\theta_{\min},\theta_{\max}$ are the lower and upper limits of the $\theta$-coordinate in $K(\overline{\gamma}_1)$.

We are interested in the characteristic foliation $K(\gamma_1)_{\xi}$ on $K(\gamma_1)$. Let $\overline{K(\gamma_1)}_{\xi}$ be the projection of $K(\gamma_1)_{\xi}$ to $K(\overline{\gamma}_1)$. We have
\begin{align*}
\overline{K(\gamma_1)}_{\xi} &= \ker (-R_1 d(f r^4) + \tfrac{1}{2} r^2 d\theta) \\
&= \ker ((\tfrac{1}{2} r^2 - R_1 f' r^4) d\theta - 4R_1 f r^3 dr).
\end{align*}
We claim that $\overline{K(\gamma_1)}_{\xi}$ is nonzero away from the origin if $R_2$ is sufficiently small. Indeed, away from the origin, the $dr$ component vanishes precisely when $f$ vanishes. But at these points $f'$ is finite due to (Gen3), and therefore the $d\theta$ component is nonzero for $r$ sufficiently small. 

The key to visualize $\overline{K(\gamma_1)}_{\xi}$ consists of two observations. First, note that the $dr$ component is nonvanishing whenever $f$ is nonvanishing. Second, the $d\theta$ component can possibly vanish only near $\theta_{\max}$ and $\theta_{\min}$, where $f'$ blows up. Let's consider $\theta_{\max}$ here and leave the discussion of $\theta_{\min}$ to the interested reader. If $\lim_{\theta \to \theta_{\max}} f'(\theta)=-\infty$, then the $d\theta$ component is never zero near $\theta_{\max}$. Hence we can assume that $\lim_{\theta \to \theta_{\max}} f'(\theta)=+\infty$. In this case, for each $\theta$ sufficiently close to $\theta_{\max}$, there exists a unique point $(r(\theta),\theta) \in \overline{K(\gamma_1)} \setminus \{0\}$ at which $\overline{K(\gamma_1)}_{\xi}=\ker(dr)$. Moreover, the sequence of points $(r(\theta),\theta)$ converge to the origin as $\theta \to \theta_{\max}$. 

At this point, we note that the dynamics of the characteristic foliation is better understood on the entire $K(\gamma)$ rather than on each individual $K(\gamma_i)$. More precisely, let $\nu(w_i) \subset \gamma$ be a neighborhood of $w_i$, and consider the cone $K(\nu(w_i)) \subset K(\gamma)$. Observe that if $f_{i-1}$ and $f_i$ are the defining angular functions for $\gamma_{i-1}$ and $\gamma_i$, respectively, as in \autoref{eqn:cone gamma_1}, then either $\lim_{\theta \to \theta_i} f_{i-1}'(\theta)=-\infty$ or $\lim_{\theta \to \theta_i} f_i'(\theta)=-\infty$, where $\theta_i$ denotes the angular coordinate of $w_i$. Hence by the observations made above, the restriction of $K(\gamma_i)_{\xi}$ to $K(\nu(w_i))$ is either one of the two scenarios shown in \autoref{fig:char foliation negative bdry}. Finally, note that away from the $K(\nu(w_i))$'s, the flows line of $K(\gamma)_{\xi}$ simply go from the origin towards $\gamma$.

\begin{figure}[ht]
	\begin{overpic}[scale=.4]{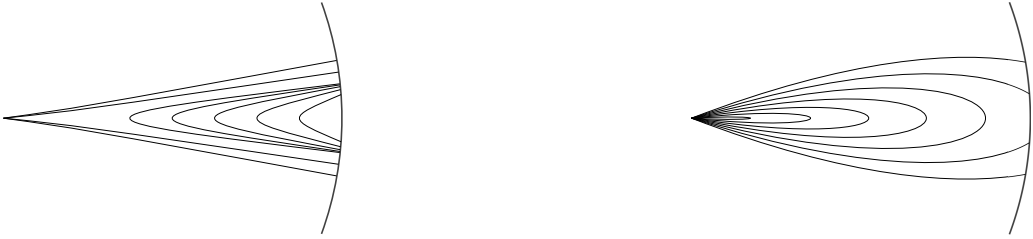}
	\end{overpic}
	\caption{Two possibilities of the restriction of $K(\gamma_i)_{\xi}$ to $K(\nu(w_i))$.}
	\label{fig:char foliation negative bdry}
\end{figure}

Finally, let's describe the Legendrian foliation $\Fcal_{H^+_1}$, which, in fact, can be read off from the characteristic foliation $(\p_+ H^+_1)_{\xi}$ on $\p_+ H^+_1$ (cf. \autoref{fig:char fol positive bdry}) as follows. Recall the vector field
\begin{equation*}
X_1|_{H^+_1} = -x_1 \p_{x_1} + \tfrac{1}{2} (x_2 \p_{x_2}+y_2 \p_{y_2}),
\end{equation*}
which is transverse to $\p_+ H^+_1$. Then each leaf $F$ of $\Fcal_{H^+_1}$ can be visualized as the totality of trajectories of $X_1|_{H^+_1}$ which pass through a leaf of $(\p_+ H^+_1)_{\xi}$. Recall that a leaf $\ell$ of $(\p_+ H^+_1)_{\xi}$ is always a properly embedded arc. Let $F(\ell)$ be the leaf of $\Fcal_{H^+_1}$ such that $F(\ell) \cap \p_+ H^+_1=\ell$. Then we have the following possibilities for the shape of $F(\ell)$ depending on the position of $\ell \subset \p_+ H^+_1$:
\begin{itemize}
	\item Suppose $\p\ell$ is contained in one component of $\p(\p H^+_1)$. Then
	\begin{itemize}
		\item if $\ell \cap \gamma=\varnothing$, then $F(\ell)$ is a disk as shown in \autoref{fig:Leg foliation on 1 handle}(a);
		\item if $\ell$ is tangent to $\gamma$, then $F(\ell)$ is a disk as shown in \autoref{fig:Leg foliation on 1 handle}(b);
		\item if $\ell$ intersects $\gamma$ is two points, then $F(\ell)$ is an annulus as shown in \autoref{fig:Leg foliation on 1 handle}(c).
	\end{itemize}
	\item Suppose the two points $\p\ell$ are contained in different components of $\p(\p H^+_1)$. Then $F(\ell)$ is a strip as shown in \autoref{fig:Leg foliation on 1 handle}(d).
\end{itemize}

\begin{figure}[ht]
	\begin{overpic}[scale=.35]{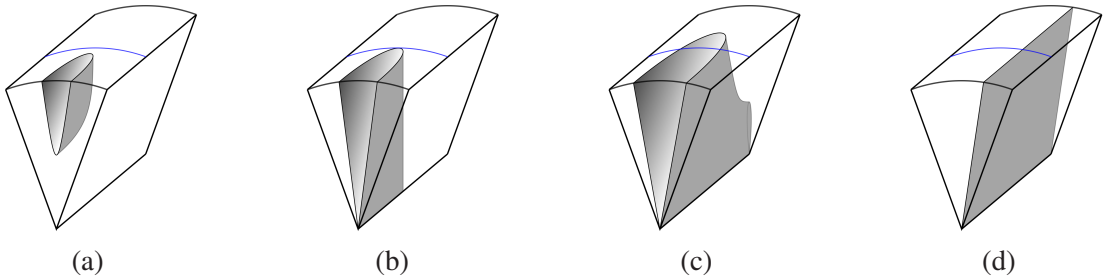}
		\put(6,-3.5){(a)}
		\put(33.7,-3.5){(b)}
		\put(61.5,-3.5){(c)}
		\put(89,-3.5){(d)}
	\end{overpic}
	\vspace{4mm}
	\caption{Different leaves of $\Fcal_{H^+_1}$. The blue arc represents $\gamma \subset \p_+ H^+_1$.}
	\label{fig:Leg foliation on 1 handle}
\end{figure}

\s\n
\textsc{Comparison between Case 1 and Case 2.}

\s
Since our interests lie in $H^+_1$ and its Legendrian foliation $\Fcal_{H^+_1}$ and not in the ambient handle $\widetilde{H}^+_1$, which may be either a $1$-handle (Case 2) or a $2$-handle (Case 1), it is instructive to compare the two models and understand their differences. To avoid confusions, conflicting notations used in (Case 1) and (Case 2) will be decorated by $(1)$ and $(2)$, respectively.

First of all, observe that the attaching region $\p_- H^{+,(1)}_1$ is always smooth, but this is not the case for $\p_- H^{+,(2)}_1$. Indeed, in the generic case, i.e., (Gen1)--(Gen3) are satisfied, a necessary condition for $\p_- H^{+,(2)}_1$ to be smooth is the nonexistence of (quadratic) tangencies on $\gamma$ (cf. (Gen2)). Equivalently, it means that $\gamma$ can be written in the form of \autoref{eqn:cone gamma_1} for a globally defined $f(\theta), \theta \in [0,2\pi]$. Strictly speaking, one also needs to assume that the oscillation of $f(\theta)$ is not too rapid to guarantee the smoothness of $K(\gamma)$. However, this technical point will not be important for us. The key observation here is that all the leaves of the Legendrian foliation $\Fcal_{H^+_1}^{(2)}$ are of the form shown in \autoref{fig:Leg foliation on 1 handle}(d), at least when $R_1,R_2$ are small.

Next, let's consider the situation where $\p_- H^{+,(1)}_1$ is smooth, i.e., $\delta$ is a diameter. In this case, the Legendrian foliation $\Fcal_{H^+_1}^{(1)}$ coincides with $\Fcal_{H^+_1}^{(2)}$ when $\mu \coloneqq \gamma$ is a meridian great circle. Note, however, that $\mu$ is not generic since it fails (Gen1)--(Gen3). In what follows we consider a particularly simple perturbation of $\mu$ so that it becomes generic and try to understand how $\Fcal_{H^+_1}^{(2)}$ changes. Continue using notations from (Case 2), suppose w.l.o.g. that $\mu=\p B^3_0 \cap \{y_2=0\}$. Let $\tau: \p B^3_0 \to \p B^3_0$ to a small rotation about the $x_2$-axis. Then $\tau(\mu)$ is generic. Indeed, it is everywhere transverse to $\Gcal|_{\p B^3_0}$. If we fix $R_1=R_2=1$, instead of letting them shrink as in (Case 2), then the characteristic foliation on one component of $\p_- H^{+,(2)}_1$ is Morse and has precisely two critical points: one source at the center and one saddle, which are in the canceling position. Moreover, as the angle of rotation $\tau$ tends to zero, the saddle approaches towards the source at the center, and cancels it in the limit\footnote{This procedure is nothing but a realization of Giroux's elimination lemma.}. On the other hand, if we fix the angle of rotation $\tau$ and shrink $R_1,R_2$, then we recover the smooth $H^{+,(2)}_1$ discussed in the previous paragraph.

\s
\subsubsection{Negative coLegendrian handle $H^-_1$ of $Y$-index $1$} \label{subsubsec:negative H_1}
Let $p_1 \in H^-_1$ be the critical point. Then $\ind_{\Sigma} (p_1)=2$ necessarily, and we are in the same situation as in \autoref{subsubsec:poitive H_1} (Case 1). Note that for negative critical points, the characteristic foliation (viewed as a vector field) and the Liouville vector field differ by a sign.

There is only one difference between the positive and the negative case which we now explain. Recall from \autoref{subsubsec:poitive H_1} (Case 1) that the characteristic foliation on $\p_+ H^+_1$ has four half-saddles, which come in two pairs $p_1,p_2$ and $q_1,q_2$. Moreover, with respect to a given orientation of $\p_+ H^+_1$, $p_1$ and $p_2$ have opposite signs and the stable manifold of the positive one coincides with the unstable manifold of the negative one. The same applies to $q_1$ and $q_2$. Now in the negative case, $\p_+ H^-_1$ also has four half saddles, which we denote by $p^-_1,p^-_2$ and $q^-_1,q^-_2$. As before, $p_1^-$ and $p_2^-$ have opposite signs, but in this case, the unstable manifold of the positive one coincides with the stable manifold of the negative one. In other words, there exist two flow lines coming from the negative (half) saddle and flowing into the positive one. The same applies to $q_1^-,q_2^-$.

\s
\subsubsection{Positive coLegendrian handle $H^+_2$ of $Y$-index $2$} \label{subsubsec:positive H_2}
Let $p_2 \in H^+_2$ be the critical point. Then $\ind_{\Sigma} (p_2)=2$ necessarily. Identify the ambient handle $\widetilde{H}^+_2 \cong B^2 \times B^2$ and continue using the model from \autoref{subsubsec:poitive H_1} (Case 1). In particular, the Liouville vector field and the Liouville form are given by \autoref{eqn:index 2 vf} and \autoref{eqn:index 2 Liouville form}, respectively. This case is completely dual to \autoref{subsubsec:negative H_1} in the sense that the characteristic foliations on $\p_{\pm} H_2^+$ can be identified with those on $\p_{\mp} H^-_1$, respectively, and the Legendrian foliations $\Fcal_{H^+_2}$ and $\Fcal_{H^-_1}$ coincide with a flip of coordinates.

\s
\subsubsection{Negative coLegendrian handle $H^-_2$ of $Y$-index $2$} \label{subsubsec:negative H_2}
Let $p_2 \in H^-_2$ be the critical point. Then $\ind_{\Sigma} (p_2)=2$ or $3$. Now the $\ind_{\Sigma} (p_2)=2$ case is dual to \autoref{subsubsec:poitive H_1} (Case 1) and the $\ind_{\Sigma} (p_2)=3$ case is dual to \autoref{subsubsec:poitive H_1} (Case 2). We omit the details. 

\s
\subsubsection{CoLegendrian handle $H_3$ of $Y$-index $3$} \label{subsubsec:H_3}
Let $p_3 \in H_3$ be the critical point. Then $\ind_{\Sigma} (p_3)=3$ or $4$. In either case $H_3$ is necessarily a negative handle. Here the $\ind_{\Sigma} (p_3)=3$ case is dual to \autoref{subsubsec:H_0} (Case 1) and the $\ind_{\Sigma}(p_3)=4$ case is dual to \autoref{subsubsec:H_0} (Case 2). We omit the details.

\begin{remark}
	As a concluding remark to our constructions of coLegendrian handles, note that these handles as constructed  not necessarily smooth and there may be cones, corners and families of cones. However, from a purely topological point of view, the smoothness regularity of the handles can be much lower than those considered in this subsection. For example, the loop $\gamma$ considered in \autoref{subsubsec:negative H_1} (Case 2) are assumed to be smooth for no obvious reasons. Our choices will be justified in the next subsection where we study the existence of coLegendrians.
\end{remark}

\subsection{Existence of coLegendrians} \label{subsec:coLeg existence}
The goal of this subsection is to prove the following result on coLegendrian approximation.

\begin{prop} \label{prop:coLeg approx}
	Suppose $Y \subset (M^5,\xi)$ is a closed $3$-submanifold with trivial normal bundle. Then $Y$ can be $C^0$-approximated by a regular coLegendrian with isolated cone singularities.
\end{prop}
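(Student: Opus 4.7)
The plan is to bootstrap the basic coisotropic approximation of \cite[Section 12]{HH19}, whose output tolerates arbitrary singularities, and upgrade the control so that only isolated cones remain. Since $T_YM$ is trivial, a nowhere-vanishing normal section exponentiates to an enveloping hypersurface $\Sigma_0 \supset Y$. First apply the relative version of \autoref{thm:morse is generic} to obtain a Morse$^+$ hypersurface $\Sigma$ that $C^0$-approximates $\Sigma_0$, and then invoke \cite[Section 12]{HH19} to perturb $Y$ inside $\Sigma$ so that it becomes tangent to $\Sigma_\xi$. By \autoref{lem:coiso tangent to char fol}, the smooth part of the resulting submanifold is automatically coLegendrian, and its singularities are concentrated at the critical points of $\Sigma_\xi|_Y$ in $Y$.

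The heart of the argument is to eliminate every singularity other than the isolated cones at $\Sigma$-index $0$ and $\Sigma$-index $4$ critical points lying on $Y$. The handle analysis of \autoref{subsec:coLeg handles} provides a complete catalogue of what can go wrong: the circle-corners appearing in \autoref{subsubsec:poitive H_1} Case 1 whenever the union of two radii $\delta$ fails to be a diameter, the one-parameter families of cones arising in \autoref{subsubsec:poitive H_1} Case 2 from the loop $\gamma$, and their duals in \autoref{subsubsec:negative H_1}--\autoref{subsubsec:H_3}. I would eliminate these in two phases. First, rearrange the Morse function on $\Sigma$ by births and cancellations of $\Sigma$-critical points, supplemented by Giroux's elimination lemma, so that every $Y$-index $1$ or $2$ critical point of $\Sigma_\xi|_Y$ sits inside a $\Sigma$-index $2$ handle rather than a $\Sigma$-index $1$ or $3$ handle; the Comparison paragraph at the end of \autoref{subsubsec:poitive H_1} supplies the local model, read in reverse, that turns a Case 2 handle into a Case 1 handle. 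Second, perform a further $C^0$-small perturbation of $\Sigma$ inside each Case 1 handle to straighten its defining radii into a genuine diameter, at which point the handle becomes smooth.

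The main obstacle is global coherence: each local modification of $\Sigma$ must preserve the tangency of $Y$ to the updated $\Sigma_\xi$ and must not introduce fresh singularities on neighboring handles. The relative version of \autoref{thm:morse is generic} is what makes this manageable, since it permits perturbing $\Sigma$ off a closed set on which the data is already acceptable. One therefore proceeds handle-by-handle in order of increasing $Y$-index, freezing each handle as soon as its singularities have been either smoothed or reduced to the allowed cone type at a $\Sigma$-index $0$ or $\Sigma$-index $4$ critical point. The resulting submanifold $C^0$-approximates the original $Y$ and is a regular coLegendrian carrying only isolated cone singularities, which is the content of the Proposition.
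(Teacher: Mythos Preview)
Your proposal has the right overall architecture---first approximate, then smooth handle-by-handle---but the smoothing scheme you sketch contains a genuine obstruction at the $Y$-index $2$ handles, and this is exactly where the bulk of the paper's work lies.

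You propose to arrange that every $Y$-index $2$ critical point sits inside a $\Sigma$-index $2$ handle. But if $\ind_\Sigma(p_2)=2$ for a $Y$-index $2$ point $p_2$, then the stable disks of the $Y$-handle and the ambient $\Sigma$-handle are both $2$-dimensional and must coincide; in particular the attaching circle $\beta \subset \partial Y^{(1)}$ is forced to be the Legendrian attaching circle of a Weinstein $2$-handle. Generically $\beta$ is just a smooth curve with no such property, and no amount of ``rearranging the Morse function on $\Sigma$'' will make an arbitrary curve Legendrian. So the index manipulation you propose is not available here, and ``straightening the radii'' never gets a chance to run.

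The paper handles this very differently. It keeps the $Y$-index $2$ handles inside $\Sigma$-index $3$ handles and instead works to make the attaching circle \emph{transverse} to the characteristic foliation on $\partial Y^{(1)}$. This is done by (i) first replacing the positive $1$-handles by \emph{turbine} $1$-handles---i.e.\ going from Case~1 to Case~2, the reverse of what you propose, precisely because the smooth Case~1 model is too rigid; (ii) transforming each negative $1$-handle by inserting a cancelling pair so that $\beta$ can be isotoped to a coherently transverse parallel copy $\beta''$; and (iii) constructing by hand a new $2$-handle along $\beta''$ by deforming the filling $4$-ball into a configuration of $3m-2$ handles governed by the self-linking number of $\beta''$, then cancelling the original $2$-handle against a $3$-handle. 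This Substep~2.5 is the technical heart of the proof and has no counterpart in your outline. A secondary point: under the approximation coming from \cite[\S 12]{HH19} one has $\ind_\Sigma = \ind_Y + 1$, so there are no $\Sigma$-index $0$ points on $Y$ at the outset; the cone singularities only appear after one deliberately lowers the $\Sigma$-index of the $Y$-index $0$ points, a step your proposal does not mention.
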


\begin{proof}
	The proof essentially consists of two steps\footnote{The weight of the two steps may seem extremely imbalanced: Step 2 is some ten times longer than Step 1. But the truth is that Step 1 relies on all of \cite{HH19}, which is some ten times longer than Step 2.}. The first step is to approximate $Y$ by a regular coLegendrian with various singularities appeared in \autoref{subsec:coLeg handles}, and the second step is to eliminate all singularities except an isolated collection of cones.
	
	\s\n
	\textsc{Step 1.} \textit{Topological approximation.}
	
	\s
	Consider a hypersurface $\Sigma \coloneqq Y \times [-1,1] \subset M$ such that $Y$ is identified with $Y \times \{0\}$. By the existence $h$-principle for contact submanifolds in \cite{HH19}, we can assume, up to a $C^0$-small perturbation of $\Sigma$, that $\Sigma_{\xi}=\p_s$ where $s$ denotes the coordinate on $[-1,1]$. Again by the folding techniques developed in \cite{HH19}, one can further $C^0$-perturb $\Sigma$ such that with respect to the new Morse vector field $\Sigma_{\xi}$, there exists a (topological) copy of $Y$ satisfying the following 
	\begin{itemize}
		\item[(RA1)] $Y$ is tangent to $\Sigma_{\xi}$;
		\item[(RA2)] $\Sigma_{\xi}|_Y$ is Morse;
		\item[(RA3)] $\Sigma_{\xi}$ is inward pointing along the $1$-dimensional transverse direction to $Y$.
	\end{itemize}
	Note that there also exists a (disjoint) copy of $Y$ which satisfies all the above conditions but replacing ``inward pointing'' by ``outward pointing'' in (RA3). Our choice here is completely arbitrary. 
	
	In this way, we have constructed a $C^0$-approximation of $Y$ which is regular and coisotropic according to \autoref{lem:coiso tangent to char fol}. \emph{By abusing notations, we will denote the approximating regular coisotropic submanifold by $Y$ in what follows.} However, such $Y$ may not be everywhere smooth and our next task is to analyze its singularities. 
	
	\s\n
	\textsc{Step 2.} \textit{Smoothing of singularities.}
	
	\s
	Observe that for any critical point $p \in Y$, we have $\ind_Y (p)+1=\ind_{\Sigma} (p)$ by (RA3). This is a rather strong constraint on the structure of $Y$. We will make use of this rigidity in the beginning of the argument and gradually get rid of it as more flexibility becomes necessary. For clarity, this step is further subdivided into substeps according to the $Y$-index of the handles.
	
	\s\n
	\textsc{Substep 2.1.} \textit{The $0$-handles $H_0$.}
	
	\s
	According to \autoref{subsubsec:H_0} (Case 1), $H_0$ is a smooth $3$-ball with boundary $\p H_0$. Hence there is nothing to smooth. Note that the characteristic foliation on $\p H_0$ is standard (cf. \autoref{eqn:Leg foliation on H0}).
	
	\s\n
	\textsc{Substep 2.2.} \textit{Round the $1$-handles $H_1^{\pm}$.}
	
	\s
	We only discuss the case of $H^+_1$ and note that the case of $H^-_1$ is similar. According to \autoref{subsubsec:poitive H_1} (Case 1), the ambient handle $\widetilde{H}^+_1 \cong B^2 \times B^2$ comes with the Liouville form $\lambda_2$ given by \autoref{eqn:index 2 Liouville form}, and $H_1^+ = \delta \times B^2$ where $\delta \subset B^2_x$ is the union of two radii $\delta_1$ and $\delta_2$. It follows that $H_1^+$ is smooth exactly when $\delta$ is a diameter. The goal of this substep is apply a Hamiltonian perturbation to $\widetilde{H}^+_1$ to, in effect, round $\delta$ and hence also $H^+_1$. Roughly speaking, the idea is that the Hamiltonian isotopy, when restricted to $B^2_x$, generates a partial rotation which rotates, say, $\delta_2$ to an angle opposite to that of $\delta_1$. See \autoref{fig:round radii}.
	
	\begin{figure}[ht]
		\begin{overpic}[scale=.4]{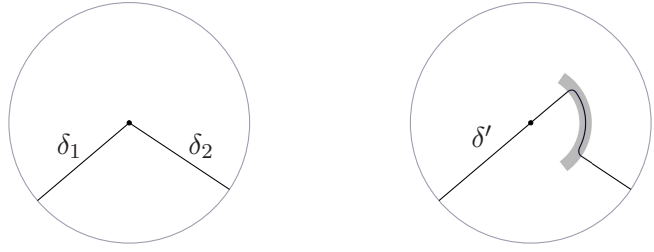}
			\put(7.5,14){$\delta_1$}
			\put(28,14){$\delta_2$}
			\put(72,15){$\delta'$}
		\end{overpic}
		\caption{Smoothing of $\delta=\delta_1 \cup \delta_2$.}
		\label{fig:round radii}
	\end{figure}
	
	To carry out the details, let's introduce polar coordinates $(r,\theta) \in B^2_x$ and dual coordinates $(r^{\ast},\theta^{\ast}) \in B^2_y$ defined by
	\begin{equation*}
	r^{\ast} \coloneqq y_1\cos\theta+y_2\sin\theta \quad\text{ and }\quad \theta^{\ast} \coloneqq y_2r\cos\theta-y_1r\sin\theta.
	\end{equation*}
	One can check that under this change of coordinates
	\begin{equation*}
	\omega=dx_1 \wedge dy_1+dx_2 \wedge dy_2 = dr \wedge dr^{\ast} + d\theta \wedge d\theta^{\ast}.
	\end{equation*}
	Consider a Hamiltonian function $H=\rho_1(r) \rho_2(r^{\ast}) \rho_3(\theta) \rho_4(\theta^{\ast})$ on $\widetilde{H}^+_1$ such that
	\begin{itemize}
		\item $\rho_1,\rho_2,\rho_3$ are $C^1$-small bump functions such that $\rho_1$ is supported in a near $r=r_0 \neq 0$, $\rho_2$ is supported near $r^{\ast}=0$, and $\rho_3$ is supported in an interval in $S^1$ of length $<\pi$ which intersects both $\delta_2$ and $-\delta_1$, but not $\delta_1$. See the shaded region in the right-hand-side of \autoref{fig:round radii}.
		\item $\rho_4$ is supported near $\theta^{\ast}=0$ and $\rho_4(0)=0$ but $\rho'_4(0) \neq 0$.
	\end{itemize}
	Observe that the Hamiltonian isotopy $\phi_H$ induced by $H$ leaves $B^2_x$ invariant since $H|_{B^2_x} \equiv 0$. Indeed, $\phi_H|_{B^2_x}$ is a partial rotation supported in $\supp(\rho_1(r) \rho_3(\theta))$ (e.g., the shaded region in the right-hand-side of \autoref{fig:round radii}), and the angle of rotation depends on $r,\theta$ and $\rho'_4(0)$. 
	
	Now consider the deformed Liouville form $\lambda'_2 \coloneqq \lambda_2+dH$ and the associated Liouville vector field $X'_2$. Then for appropriate choices of $\rho_1,\dots,\rho_4$, one can find a diameter $\delta' \subset B^2_x$ with respect to $X'_2$, i.e., a properly embedded smooth arc which is tangent to $X'_2$ and passes through the origin, such that $\delta'$ agrees with $\delta$ near $\p B^2_x$. In fact, by choosing $r_0$ sufficiently small, we can arrange so that $\delta$ agrees with $\delta'$ outside of a small neighborhood of the origin. Hence we have constructed a smoothed handle $H_1^{+,\op{sm}} \coloneqq \delta' \times B^2$ in the deformed $\widetilde{H}_1^+$.
	
	By construction, the smoothed $1$-handles $H_1^{+,\op{sm}}$ are attached to the $0$-handles in the same way that the original $1$-handles $H_1^+$ are attached. It remains to argue that the above smoothing operation does not affect the subsequent $2$ and $3$-handle attachments. Indeed, let $Y^{(1)}$ be the union of $0$ and $1$-handles before smoothing. Then a $2$-handle $H^-_2$ (which is a slice of a $3$-handle in $\Sigma$) is attached along a loop $\gamma \subset \p Y^{(1)}$. Note that generically, $\gamma$ may not be smooth since $\p Y^{(1)}$ is not smooth in general. In fact, $\gamma$ may have corners exactly where $\p Y^{(1)}$ has corners. However, the smoothing of the $1$-handles as described above simultaneously smooths the $\gamma$'s. See \autoref{fig:round corner cont'd}. Finally, the $3$-handle attachments are clearly not affected.
	
	\begin{figure}[ht]
		\begin{overpic}[scale=.5]{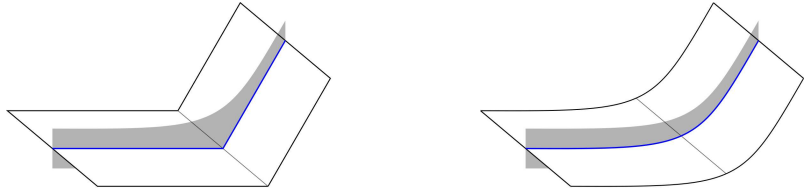}
		\end{overpic}
		\caption{Rounding the corners on $\p Y^{(1)}$ and $\gamma$ (blue). The shaded region represents part of the attaching sphere of the ambient $3$-handle $\widetilde{H}_2^-$ in $\Sigma$.}
		\label{fig:round corner cont'd}
	\end{figure}
	
	\emph{From now on, all the $1$-handles $H_1^{\pm}$ are assumed to be smooth.} 
	
	As it turns out, the $1$-handles $H_1^{\pm}$ which are slices of ambient $2$-handles are too rigid for our later purposes of getting rid of singularities on the $H_2^-$'s.  Hence we will spend the next two substeps on certain modifications of $H_1^+$ and $H_1^-$, respectively, as preparations to subsequent $2$-handle attachments.

	\s\n
	\textsc{Substep 2.3.} \textit{Transform $H_1^+$.}
	
	\s
	From this point on, we will gradually drop the condition (RA3) imposed in Step 1. Let $p_1 \in H_1^+$ be the critical point. \emph{By a $C^0$-small perturbation of $\Sigma$ relative to $Y$ near $p_1$, one can arrange $\ind_{\Sigma}(p_1)=1$.} Note that this is possible since the stable manifold of $p_1$ in $Y$ is assumed to smooth by Substep 2.2. Therefore we are in the situation of \autoref{subsubsec:poitive H_1} (Case 2), where $\gamma=\mu$ is a meridian great circle on $\p B_0^3$. By the discussions in \autoref{subsubsec:poitive H_1}, a small rotation of $\gamma$ makes it everywhere transverse to $\Gcal|_{\p B_0^3}$. In what follows, let's write $\mu$ for the great circle and $\gamma$ for the slightly rotated copy. Also write $H_1^+(\mu)$ and $H_1^+(\gamma)$ for the corresponding $1$-handles. Then the characteristic foliations on $\p H_1^+(\mu)$ and $\p H_1^+(\gamma)$ are shown in \autoref{fig:char foliation modify H1+}. In particular, observe that $\mu$ is tangent to the characteristic foliation while $\gamma$ is transverse. Moreover, the two saddles on $\p_+ H_1^+(\gamma)$ are separated by $\gamma$.
	
	\begin{figure}[ht]
		\begin{overpic}[scale=.4]{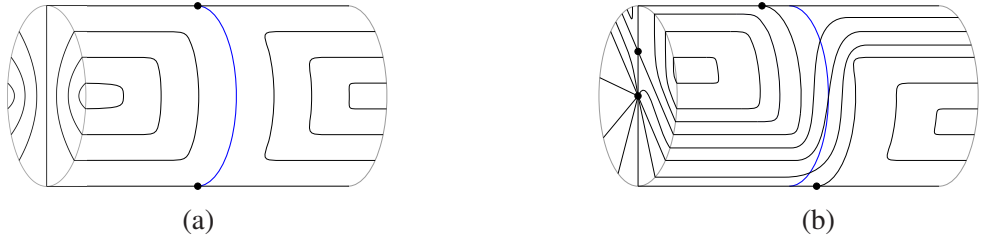}
			\put(18,-4){(a)}
			\put(81.7,-4){(b)}
		\end{overpic}
		\vspace{5mm}
		\caption{(a) The characteristic foliation on $\p H_1^+(\mu)$. (b) The characteristic foliation on $\p H_1^+(\gamma)$. The dots indicate the critical points and the blue circles represent $\mu$ and $\gamma$, respectively.}
		\label{fig:char foliation modify H1+}
	\end{figure}
	
	If we identify $H_1^+(\gamma) \cong B^1(R) \times B^2$ where $B^1(R) \coloneqq [-R,R]$ and $\gamma=\{0\} \times \p B^2$, then for $R'>0$ sufficiently small, the truncated $1$-handle $\widecheck{H}_1^+(\gamma) \coloneqq B^1(R') \times B^2 \subset H_1^+$ is smooth(able) and the Legendrian leaves in $\widecheck{H}_1^+(\gamma)$ are all of the type shown in \autoref{fig:Leg foliation on 1 handle}(d). We call such $\widecheck{H}_1^+(\gamma)$ a \emph{turbine} $1$-handle since it looks like a turbine engine. Note that the characteristic foliation on $\p_+ \widecheck{H}_1^+ (\gamma)$ is linear, and on each (disk) component of $\p_- \widecheck{H}_1^+ (\gamma)$ is a neighborhood of either a source or a sink.
	
	Now the idea is to push $H_1^+(\gamma) \setminus \widecheck{H}_1^+(\gamma)$ down to the $0$-handles. However, for this to work, one must drop the condition (RA3) near $Y$-index $0$ critical point since it imposes too strict restrictions on the characteristic foliations on $\p H_0$ (cf. Substep 2.1). \emph{By a $C^0$-small perturbation of $\Sigma$ relative to $Y$ near the $Y$-index $0$ critical points, one can arrange so that their $\Sigma$-index is also $0$.} The main advantage of this modification is that the characteristic foliation on $\p_+ H_0$ can now be anything realizable by an embedded $S^2 \subset (S^3,\xi_{\std})$. Of course, as a price paid for this extra flexibility, we lose, in general, the smoothness of $H_0$ at the critical point which is turned into a cone singularity. After this modification, one can easily push $H_1^+(\gamma) \setminus \widecheck{H}_1^+(\gamma)$ down to the $0$-handles using the flow of $\Sigma_{\xi}|_{H_1^+(\gamma) \setminus \widecheck{H}_1^+(\gamma)}$. In particular, the saddles on $\p_+ H_1^+(\gamma)$ become saddles on appropriate $\p_+ H_0$'s.
	
	\emph{From now on, all the positive $1$-handles $H_1^+$ are assumed to be turbine $1$-handles.}
	
	\s\n
	\textsc{Substep 2.4.} \textit{Transform $H_1^-$.}
	
	\s
	Let $p_1 \in H_1^-$ be the critical point. Then $\ind_{\Sigma} (p_1)=2$ necessarily, and the trick for $H_1^+$ does not apply to the negative case. Instead, the plan is to create a canceling pair of negative critical points of $Y$-index $1$ and $2$ within $H_1^-$ so that a single negative $1$-handle $H_1^-$ will be turned into a combination of two copies of $H_1^-$ and one $H_2^-$.
	
	To carry out the plan, let's identified the ambient $2$-handle $\widetilde{H}_1^- \cong B^2_x \times B^2_y$ equipped with the \emph{negative} Liouville vector field
	\begin{equation*}
	X_2^- \coloneqq -2x_1 \p_{x_1} - 2x_2 \p_{x_2} + y_1 \p_{y_1} + y_2 \p_{y_2},
	\end{equation*}
	with respect to the standard symplectic form $\omega_{\std}=dx_1 \wedge dy_1 + dx_2 \wedge dy_2$. Recall that near negative critical points, the (oriented) characteristic foliation coincides with the negative Liouville vector field. Then the embedding $H_1^- = B^1 \times B^2 \subset \widetilde{H}_1^-$ is given by $x_2=0$. In particular,
	\begin{equation*}
	X_2^-|_{H_1^-} = -2x_1 \p_{x_1} + y_1 \p_{y_1} + y_2 \p_{y_2}.
	\end{equation*}
	Now one can create a pair of negative critical points $q_1,q_2$ of $Y$-index $1$ and $2$ (and $\Sigma$-index of $2$ and $3$), respectively, along the positive (or negative) $y_2$-axis such that the Legendrian foliation $\Fcal_{H^-_1}$ remains unchanged. See \autoref{fig:modify Leg foliation on negative H1-1}, where we draw only the Legendrian foliation and not explicitly the handles. It follows that the contact germ on $H_1^-$ is unchanged under such modification since it is determined by $\Fcal_{H^-_1}$. To avoid confusions, let's denote the original $H_1^-$ by $H_1^{-,\op{orig}}$, the $1$-handles corresponding to $p_1,q_1$ by $H_1^- (p_1), H_1^- (q_1)$, respectively, and the $2$-handle corresponding to $q_2$ by $H_2^- (q_2)$. Then
	\begin{equation*}
	H_1^{-,\op{orig}} = H_1^- (p_1) \cup H_1^- (q_1) \cup H_2^- (q_2).
	\end{equation*}
	
	\begin{figure}[ht]
		\begin{overpic}[scale=.2]{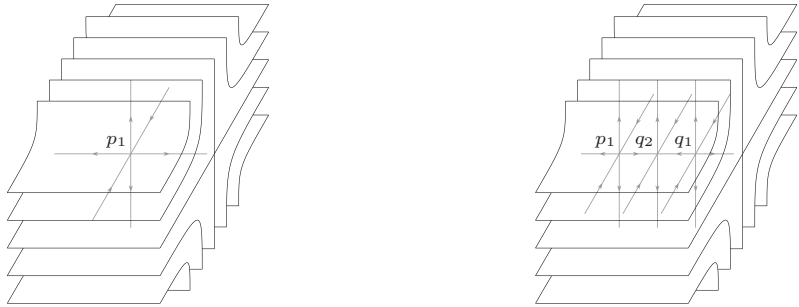}
			\put(12.5,20.3){\tiny{$p_1$}}
			\put(74.5,20.3){\tiny{$p_1$}}
			\put(79.5,20.3){\tiny{$q_2$}}
			\put(84.5,20.3){\tiny{$q_1$}}
		\end{overpic}
		\caption{Creation of critical points $q_1,q_2$ on $H_1^-$ without changing $\Fcal_{H^-_1}$.}
		\label{fig:modify Leg foliation on negative H1-1}
	\end{figure}
	
	Observe that the attaching locus $\p_- H_2^-(q_2)$ contains two saddles $h_{\pm}$ of opposite signs such that the unstable manifold of $h_+$ coincides with the stable manifold of $h_-$. Indeed $\p_- H_2^-(q_2)$ is a tubular neighborhood of the tangential loop $\mu$ passing through $h_{\pm}$. Moreover, up to a flip of orientation, we can assume $h_+ \in \p_+ H_1^-(p_1)$ and $h_- \in \p_+ H_2^-(q_1)$. For later use, let $h_-(p_1) \in \p_+ H_2^-(p_1)$ and $h_+(q_1) \in \p_+ H_2^-(q_1)$ be the other saddles.
	
	Next, we want to perturb $H_2^-(q_2)$ such that $\p_- H_2^-(q_2)$ becomes a neighborhood of a transverse loops instead of a tangential loop. This procedure is dual to the perturbation discussed in Substep 2.3. Namely, the boundary of the stable manifold of the ambient $3$-handle $\widetilde{H}_2^-(q_2)$ is a $2$-sphere equipped with a restricted characteristic foliation identical to $\Gcal|_{\p B_0^3}$. Moreover $\mu$, the boundary of the stable manifold of $q_2$, is a meridian great circle. By the same construction as in Substep 2.3, one can apply a small rotation to $\mu$ to obtain a transverse loop $\gamma$. Then we obtain a perturbed $2$-handle $H_2^{-,\op{pert}} (q_2)$ whose stable manifold is the cone over $\gamma$. See \autoref{fig:modify Leg foliation on negative H1-2}. Note that the left-hand-side of \autoref{fig:modify Leg foliation on negative H1-2} is the same as the right-hand-side of \autoref{fig:modify Leg foliation on negative H1-1}, except that the $2$-handle corresponding to $q_2$ looks somewhat squashed. Let's introduce a piece of notation for later use: a $2$-handle $H_2^-$ is \emph{turbine} if the unstable manifold is a cone over a transverse loop. Note that turbine $2$-handles are dual to turbine $1$-handles introduced in Substep 2.3.
	
	\begin{figure}[ht]
		\begin{overpic}[scale=.18]{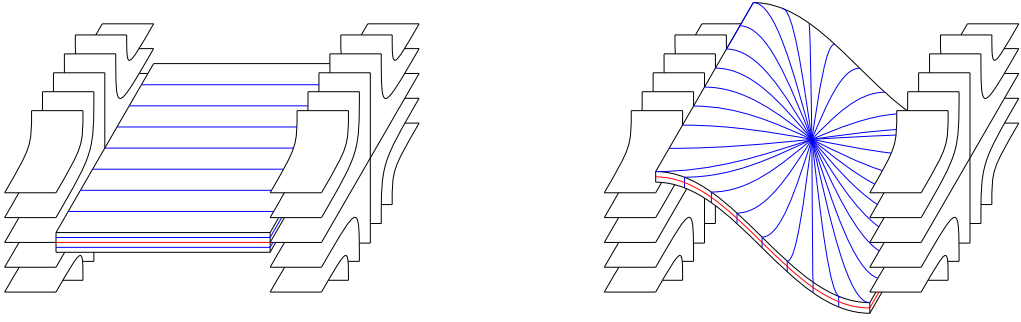}
		\end{overpic}
		\caption{The unperturbed $H_2^-(q_2)$ on the left, and the perturbed $H_2^{-,\op{pert}} (q_2)$ on the right.  The characteristic foliations on $\p H_2^- (q_2)$ are drawn in blue. The red curves represent the tangential $\mu$ on the left and the transverse $\gamma$ on the right.}
		\label{fig:modify Leg foliation on negative H1-2}
	\end{figure}
	
	Write
	\begin{equation*}
	H_1^{-,\op{pert}} \coloneqq H_1^-(p_1) \cup H_1^-(q_1) \cup H_2^{-,\op{pert}}(q_2),
	\end{equation*}
	where $H_2^{-,\op{pert}}(q_2)$ is a turbine $2$-handle. We describe the characteristic foliation on $\p H_1^{-,\op{pert}}$ as follows. On $\p_- H_1^{-,\op{pert}}$, the characteristic foliation is linear, just as in the unperturbed case although the holonomy is slightly changed due to the perturbation. On the other hand, note that the union of the stable manifold of $h_-(p_1)$ and the unstable manifold of $h_+(q_1)$ separates $\p_+ H_1^{-,\op{pert}}$ into two connected components. On one component, one has two critical points $h_+$ and a source in canceling position, and on the other component, one has $h_-$ and a sink also in canceling position. See \autoref{fig:modify Leg foliation on negative H1-2}.
	
	Finally, note that the above transformation of $H_1^-$ can be done repeatedly. The number of times we transform $H_1^-$ as above will depend on how the $2$-handles are attached subsequently.

	\s\n
	\textsc{Substep 2.5.} \textit{Smoothing the $2$-handles $H^-_2$.}
	
	\s
	Let $Y^{(1)}$ be a neighborhood of the $1$-skeleton of $Y$. We introduce a special collection of loops on $\p Y^{(1)}$ as follows: an $\alpha_{\pm}$ curve is the intersection between $\p Y^{(1)}$ and the unstable manifold of a $H_1^{\pm}$, respectively, and a $\beta$ curve is the intersection between $\p Y^{(1)}$ and the stable manifold of a $H_2^-$. By Substep 2.3 and 2.4, $\alpha_+$ curves are transverse and $\alpha_-$ curves are tangential with respect to the characteristic foliation $(\p Y^{(1)})_{\xi}$. By genericity, assume moreover that $\beta$ curves are transverse to $\alpha_{\pm}$ curves. Note, however, that the relative position between a $\beta$ curve and $(\p Y^{(1)})_{\xi}$ can be complicated. The main idea of this substep is, roughly speaking, to ``modify'' the $\beta$ curves to become transverse to $(\p Y^{(1)})_{\xi}$.
	
	Focus on one $\beta$ curve for the moment. We will try to find a parallel copy of $\beta$ which is transverse to $(\p Y^{(1)})_{\xi}$. Clearly this is not possible in general if we keep $(\p Y^{(1)})_{\xi}$ unchanged. First we want to find a parallel copy $\beta'$ of $\beta$ which is \emph{coherently} transverse to $(\p Y^{(1)})_{\xi}$ near the $\alpha_{\pm}$ curves. Here by ``coherently transverse'' we mean the following. Note that $\beta \subset \p Y^{(1)}$ is two-sided. We say $\beta$ is \emph{coherently transverse} to $(\p Y^{(1)})_{\xi}$ in some region (e.g., a neighborhood of the $\alpha_{\pm}$ curves) if $(\p Y^{(1)})_{\xi}$ flows from one chosen side of $\beta$ to the other side in this region. Of course this notion of coherent transversality is stronger than just transversality only when the region in question is disconnected. The coherently transverse $\beta'$ near $\alpha_+$ curves can be produced from $\beta$ by an isotopy near $\alpha_+$ as shown in \autoref{fig:make beta transverse}~(a). On the other hand, the coherently transverse $\beta'$ near $\alpha_-$ curves can be produced by first making a transformation of $H_1^-$ corresponding to the $\alpha_-$ curve, followed by an isotopy of $\beta$. See \autoref{fig:make beta transverse}~(b). Note that the set of $\alpha_-$ curves changes through the transformations of $H_1^-$ as new handles are created.
	
	\begin{figure}[ht]
		\begin{overpic}[scale=.54]{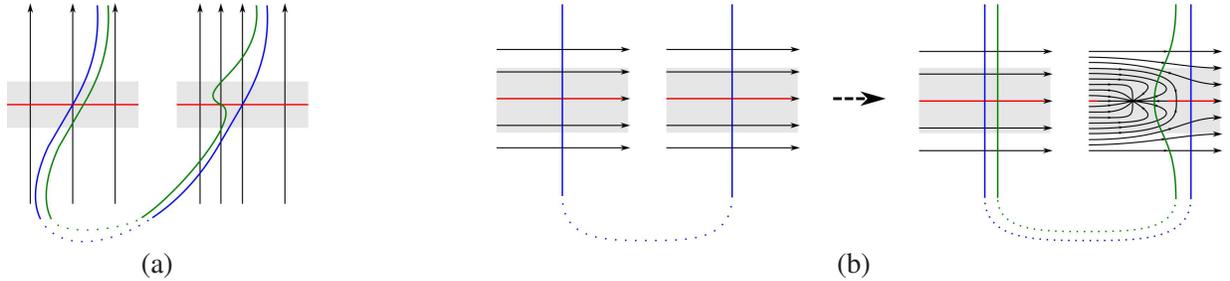}
			\put(11,-2.5){(a)}
			\put(68,-2.5){(b)}
		\end{overpic}
		\vspace{4mm}
		\caption{Making $\beta$ coherently transverse to $\alpha_+$ (left) and $\alpha_-$ (right). The $\alpha$ curves are painted in red, the $\beta$ curves in blue, and the coherently transverse parallel copy of $\beta$ in green. The shaded regions indicate neighborhoods of $\alpha_{\pm}$ curves.}
		\label{fig:make beta transverse}
	\end{figure}
	
	Let $N(\alpha) \subset \p Y^{(1)}$ be a neighborhood of the original $\alpha_{\pm}$ curves, i.e., before the transformations of $H_1^-$. Then the above procedure gives us a parallel copy $\beta'$ of $\beta$ which is coherently transverse to $(\p Y^{(1)})_{\xi}$ within $N(\alpha)$. Now observe that $\p Y^{(1)} \setminus N(\alpha)$ can be viewed as a subsurface of $2$-spheres in $(S^3,\xi_{\std})$. Hence up to a $C^0$-small perturbation of $\p Y^{(1)} \setminus N(\alpha)$ supported in a small neighborhood of $\beta'$, relative to $\beta'$, one can find another parallel copy $\beta''$ which is everywhere transverse to (the perturbed) $(\p Y^{(1)})_{\xi}$. This procedure can be equivalently viewed as a transverse approximation of arcs contained in a surface, which is the $3$-dimensional (trivial) case of the contact approximation described in \cite{HH19}. We apply this transverse approximation to every component of $\beta$.
	
	\emph{From now on, we assume $\beta''$ is a parallel copy of $\beta$ such that each component is transverse.} In particular, $\beta$ and $\beta''$ curves are pairwise disjoint. Let $Y^{(2)}$ be a neighborhood of the $2$-skeleton. Then each component of $\p Y^{(2)}$ can be viewed as a (not necessarily smooth) $2$-sphere in $(S^3,\xi_{\std})$, which is to be filled in by a $3$-handle, i.e., a cone in the standard Liouville $(B^4,\lambda_0)$ (cf. \autoref{subsubsec:H_3} and \autoref{eqn:index 0 Liouville form}). Note that $\beta''$ lies on the smooth part of $\p Y^{(2)}$, i.e., away from the cone points. For simplicity of notation, let's assume both $\beta$ (hence $\beta''$) and $\p Y^{(2)}$ are connected. Otherwise, each connected component can be dealt with separately.
	
	Let $H_3 \subset (B^4,\lambda_0)$ be the filling $3$-handle, i.e., $\p H_3 = \p Y^{(2)}$. Then $\beta''$ can be viewed as a transverse unknot in $(S^3,\xi_{\std})$. Roughly speaking, the idea is to subdivide $H_3$ into two $3$-handles along a $2$-disk bounding $\beta''$ so that the $2$-disk itself can be turned into a (negative) $2$-handle, which will be smooth(able) since $\beta''$ is transverse. Unfortunately, as we will see, this naive idea does not always work. But we will begin with a toy case, where the naive idea does work, under the following additional assumption:
	
	\begin{assump} \label{assump:unknot}
		$\beta'' \subset (S^3,\xi_{\std})$ is the standard transverse unknot (which is the same as the transverse push-off of the standard Legendrian unknot).
	\end{assump}
	
	Recall that a standard $S^2 \subset (S^3,\xi_{\std})$ is a smoothly embedded $2$-sphere whose characteristic foliation has exactly two critical points: a source and a sink. Under \autoref{assump:unknot}, there exists a standard $S^2$ such that $S^2 \cap \p H_3 = \beta''$. Note that $\beta''$, viewed in the standard $S^2$, is a loop winding once around the source (or equivalently, the sink). Now there exists a Liouville homotopy which turns the standard $(B^4,\lambda_0)$ into a collection of two standard balls $B_1,B_2$, joined by a symplectic $1$-handle (cf. \autoref{eqn:index 1 vf} and \autoref{eqn:index 1 Liouville form}) such that the unstable manifold $D \cong B^3$ of the $1$-handle intersects $\p B^4$ along the standard $S^2$ as above. Let $K(\beta'') \subset D$ be the cone over $\beta''$. Then $K(\beta'')$ can be thickened to a coLegendrian (turbine) $2$-handle $H_2^-$. Attach this $H_2^-$ to $Y^{(2)}$ along $\beta''$. Then the boundary of $Y^{(2)} \cup H_2^-$ consists of two $2$-spheres, which can be coned off in $B_1$ and $B_2$, respectively. This completes the ``subdivision'' of $H_3$ as desired. Note, however, that we are not really subdividing $H_3$ since the original $H_3$ does not interact with the Liouville homotopy. Instead, we construct by hand a new filling of $\p Y^{(2)}$ which, as a regular coLegendrian, is build out of one $2$-handle and two $3$-handles.
	
	Now we drop \autoref{assump:unknot} and return to the general case. Since $\beta''$ is topologically the unknot, it is necessarily a (transverse) stabilization of the standard unknot. Indeed, $\beta''$ bounds a $2$-disk $\Delta$ in $(S^3,\xi_{\std})$ such that the characteristic foliation $\Delta_{\xi_{\std}}$ can be normalized to have exactly $m$ sources $e_1,\dots,e_m$ and $m-1$ negative saddles $h_1,\dots,h_{m-1}$, up to a flip of orientation of $\Delta$.\footnote{This is equivalent to saying that the self-linking number of $\beta''$ is $-m$.} Moreover, one can arrange, for definiteness, that each source has at most two saddles connect to it. The plan to to deform $(B^4,\lambda_0)$, not as a Liouville domain as in the above toy model, but rather as a (Morse) hypersurface in a Darboux $5$-ball where critical points of both signs will be created.
	
	More precisely, we will first build a regular coLegendrian $3$-ball in $B^4$, which we still denote by $D$. Thicken $D$ to $D \times [-1,1] \subset B^4$ such that the $[-1,1]$-component of the characteristic foliation points away from $0$. In particular, the $D$-index of every critical point coincides with its $B^4$-index. Finally, we glue two standard $4$-balls to $D \times [-1,1]$ along $D \times \{-1\}$ and $D \times \{1\}$, respectively, to complete the construction of the deformed $(B^4,\lambda_0)$. However, in this case $D$ will be built out of $3m-2$ coLegendrian handles, instead of being the unstable manifold of a single handle as in the toy model, which can be thought of as the case $m=1$. Another technical (and slightly unfortunate) remark is that the build of $D$ will involve handles of $D$-index $3$ but no handles of $D$-index $0$. \emph{Hence it will be more convenient to start from the highest index handles and attach to them the lower index ones, i.e., the attaching locus will be the positive boundaries instead of the usual negative boundaries.}
	
	Here is the recipe to build $D$. First lay out $k$ $D$-index $3$ handles $H_3^1,\dots,H_3^m$. Note that each $H_3^i$ comes with a Legendrian foliation by half-disks bound together along a diameter (cf. \autoref{eqn:Leg foliation on H0}). In particular, the induced characteristic foliation on each $\p H_3^i \cong S^2$, denoted by $\Gcal$, is standard. Define a \emph{square} $\square \subset (S^2,\Gcal)$ to be an embedded rectangle such that $\Gcal|_{\square}$ is a linear foliation parallel to one of the sides. Fix two disjoint squares $\square^i_{\pm} \subset \p H_3^i$ for $1 \leq i \leq m$. Next, attach $m-1$ positive $D$-index $2$ handles $H_2^{1,+},\dots,H_2^{m-1,+}$ such that $H_2^{j,+}$ joins $H_3^{j}$ and $H_3^{j+1}$ with the attaching locus $\p_+ H_2^{j,+}$ identified with $\square^j_+ \cup \square^{j+1}_+$ for $1 \leq j \leq m-1$ (cf. \autoref{subsubsec:positive H_2}). The $m-1$ negative $D$-index $2$ handles are attached similarly. Finally, attach $m-1$ (positive) $D$-index $1$ handles $H_1^{1,+},\dots,H_1^{m-1,+}$ to, intuitively speaking, fill in the holes created by the $2$-handle attachments. Namely, each $H_1^{k,+}, 1 \leq k \leq m-1$, is attached along a transverse loop that traverses once around the negative boundaries of $H_3^k, H_2^{k,+},H_3^{k+1}$, and $H_2^{k,-}$. See \autoref{fig:deform 3-handle}. Note that this particular pattern of the arrangement of the handles is in line with the characteristic foliation on $\Delta$.
	
	\begin{figure}[ht]
		\begin{overpic}[scale=.4]{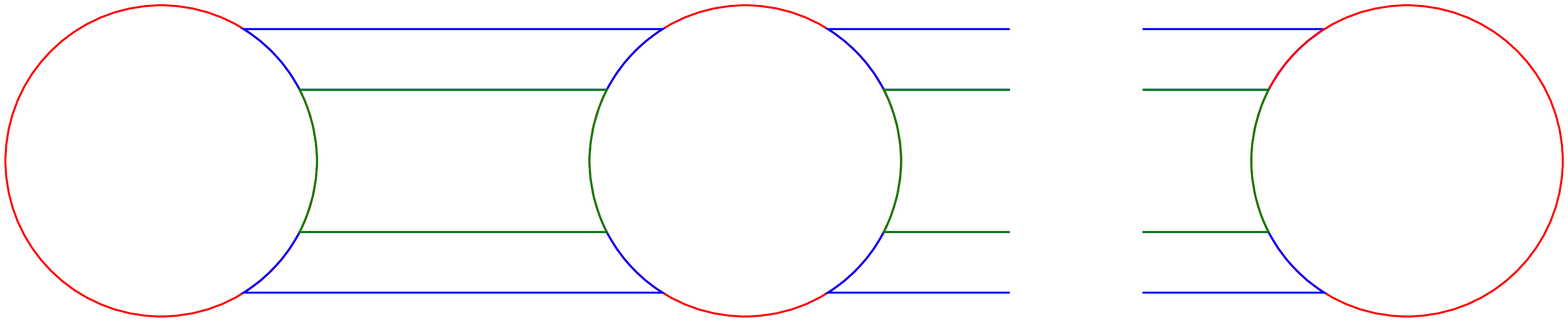}
			\put(8,9){$H_3^1$}
			\put(45.5,9){$H_3^2$}
			\put(88,9){$H_3^m$}
			\put(26,9){\small $H_1^{1,+}$}
			\put(26.5,15.85){\tiny $H_2^{1,+}$}
			\put(26.5,2.8){\tiny $H_2^{1,-}$}
			\put(66.7,16.5){$\dots$}
			\put(66.7,10){$\dots$}
			\put(66.7,3.5){$\dots$}
		\end{overpic}
		\caption{A schematic picture of the construction of $D$ using coLegendrian handles.}
		\label{fig:deform 3-handle}
	\end{figure}
	
	Continuing using the convention from \autoref{subsec:coLeg handles}, the ambient (symplectic) handle in $B^4$ corresponding to a coLegendrian handle $H_{\ast}^{\ast}$ in $D$ is denoted by $\widetilde{H}_{\ast}^{\ast}$. Then we can thicken $D$ as follows
	\begin{equation*}
	D \times [-1,1] \cong \left( \cup_{1 \leq i \leq m} \widetilde{H}_3^i \right) \cup \left( \cup_{1 \leq j \leq m-1} (\widetilde{H}_2^{j,+} \cup \widetilde{H}_2^{j,-} \cup \widetilde{H}_1^{j,+}) \right).
	\end{equation*}
	Note that $\p_+ (D \times [-1,1]) = D \times \{-1,1\}$ is contactomorphic to two copies of Darboux $3$-balls since the handles $\widetilde{H}_1^{j,+}$ and $\widetilde{H}_2^{j,+}$ cancel in pairs. It follows that one can attach two standard Liouville $4$-balls $B_1,B_2$ to $D \times [-1,1]$ along $\p_+ (D \times [-1,1])$ to complete our construction of the perturbed $B^{4,\op{pert}}$. Note that $B^{4,\op{pert}}$, viewed as a hypersurface in the Darboux $5$-ball, is deformation equivalent to the standard $(B^4,\lambda_0)$, relative to $\p B^4$, by the obvious cancellations of handles, e.g., $\widetilde{H}_1^{j,+}$ cancels $\widetilde{H}_2^{j,+}$, and $\widetilde{H}_2^{i,-}$ cancels $\widetilde{H}_3^i$, etc.
	
	The point of the above construction is that there exists a properly embedded regular $2$-disk $\Delta' \subset D$, i.e., $\Delta'$ is tangent to the Morse vector field on $D$, such that the intersection of the Legendrian foliation $\Fcal_D$ with $\Delta'$ gives a vector field identical to $\Delta_{\xi_{\std}}$ as described above. In order to find such $\Delta'$, for each $1 \leq j \leq m-1$, let's identify $H_2^{j,+} \cong B^2_{x_1,x_2} \times B^1_{y_1}$ such that the Legendrian foliation
	\begin{equation*}
	\Fcal_{H^{j,+}_2} = \ker(x_1dy_1-2y_1dx_1).
	\end{equation*}
	Let $S_j \coloneqq \{x_2=0\} \subset H_2^{j,+}$ be a strip. Assume w.l.o.g. that $S_j$ is disjoint from the attaching locus of $H_1^{j,+}$. For each $1 \leq i \leq m$, let $D_i \subset H_3^i$ be a properly embedded disk transverse to $\Fcal_{H_3^i}$ such that
	\begin{itemize}
		\item $S_j \cap \p H_3^j \subset \p D_j$ and $S_j \cap \p H_3^{j+1} \subset \p D_{j+1}$;
		\item $\p D_i$ is disjoint from the attaching loci of $H_1^{\ast,+}$ and $H_2^{\ast,-}$, where $\ast = i$ or $i-1$.
	\end{itemize}
	It follows that the union of all the $D_i$'s and $S_j$'s gives the desired $\Delta'$. Indeed, each $D_i$ contains an $e_i$ and each $S_j$ contains an $h_j$. Now the subdivision of $H_3$ can be done in the way as in the toy case. Namely, one first attach the thickened $\Delta'$ to $\p Y^{(2)}$, and then cone off the two boundary spheres in $B_1$ and $B_2$, respectively.
	
	Finally, we get rid of the original (non-smooth) $2$-handle along $\beta$ by canceling it with (either) one of the adjacent $3$-handles. Applying the above procedure to every component of $\beta$, we replace all the potentially non-smooth $H_2^-$'s by smooth ones, i.e., the attaching locus being a transverse loop, at the cost of introducing extra $H_1^+$'s.
	
	Observe that after all the above steps, the resulting $Y$ can possibly be singular only at critical points of $Y$-index $0$ and $3$, which are precisely the isolated cone singularities. The proof is therefore complete.
\end{proof}

\begin{remark} \label{rmk:necessary coLeg pieces}
	The proof of \autoref{prop:coLeg approx} actually provides more information about the approximating regular coLegendrian $Y$ than what is stated in the Proposition. Namely, one can always build such $Y$ using only the following pieces:
	\begin{itemize}
		\item[(HD1)] Cones over $S^2 \subset (S^3,\xi_{\std})$, which can have $Y$-index $0$ or $3$.
		\item[(HD2)] Positive turbine $1$-handles (cf. Substep 2.3).
		\item[(HD3)] Negative $1$-handles as described in \autoref{subsubsec:negative H_1}.
		\item[(HD4)] Negative turbine $2$-handles (cf. Substep 2.4).
	\end{itemize}
\end{remark}

\subsection{Regular coLegendrians with boundary} \label{subsec:coLeg with bdry}
Suppose $Y \subset (M^5,\xi)$ is a compact $3$-submanifold with Legendrian boundary such that the normal bundle $T_Y M$ is trivial. We say $Y$ is a \emph{regular coLengdrian with boundary} if there exists a Morse hypersurface $\Sigma \supset Y$ such that $\p Y$ is a regular Legendrian with respect to $\Sigma_{\xi}$ and $Y$ is regular as in the closed case. Sometimes we also simply say $Y$ is regular if there is no risk of confusion.

The goal of this subsection is to prove the following relative analog of \autoref{prop:coLeg approx}.

\begin{prop} \label{prop:coLeg approx with bdry}
	Suppose $Y \subset (M^5,\xi)$ is a compact $3$-submanifold with smooth Legendrian boundary such that $T_Y M$ is trivial. Then $Y$ can be $C^0$-approximated, relative to $\p Y$, by a regular coLegendrian with isolated cone singularities in the interior.
\end{prop}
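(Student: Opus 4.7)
The plan is to reduce to \autoref{prop:coLeg approx} by first arranging a Morse hypersurface near $\p Y$ in which $\p Y$ is a regular Legendrian, and then running the closed-case argument in the interior relative to a collar.

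First, I would apply \autoref{prop:regular Leg} to the smooth Legendrian $\p Y$ to obtain a Morse$^+$ hypersurface $\Sigma_{\p}$ in a tubular neighborhood of $\p Y$ such that $\p Y$ is regular with respect to $(\Sigma_{\p})_{\xi}$. Since $T_Y M$ is trivial, $Y$ induces a canonical $1$-framing on $\p Y$ in the sense of \autoref{defn:1-framing}. Choosing $\Sigma_{\p}$ to correspond to this $1$-framing (using the freedom in \autoref{prop:regular Leg}) allows, after a $C^0$-small perturbation of $Y$ relative to $\p Y$, the identification of a collar $C \coloneqq \p Y \times [0,\epsilon] \subset Y$ with a subset of $\Sigma_{\p}$ which is tangent to $(\Sigma_{\p})_{\xi}$. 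By \autoref{lem:coiso tangent to char fol}, $C$ is already a regular coLegendrian, and by construction its only critical points (those of $\p Y$, of $\p Y$-index $0,1,2$) sit on $\p Y$ itself, none of which has $Y$-index $0$ or $3$.

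Next, following Step 1 of the proof of \autoref{prop:coLeg approx}, I would thicken $Y \setminus C$ to $(Y \setminus C) \times [-1,1]$ and glue this onto $\Sigma_{\p}$ along $\p C \setminus \p Y$. Applying the relative version of \autoref{thm:morse is generic}, one obtains a Morse hypersurface $\Sigma$ extending $\Sigma_{\p}$ and containing a $C^0$-approximation of $Y$ satisfying conditions (RA1)--(RA3) on the interior. Then I would run Substeps 2.1--2.5 from the proof of \autoref{prop:coLeg approx} on the interior of $Y$, performed relative to $C$. Since all the handle modifications (smoothing $1$-handles, turning positive $1$-handles into turbine handles, creating canceling pairs to perturb negative $1$-handles, and subdividing $3$-handles to resolve non-smooth $2$-handles) are local near critical points of $\Sigma_{\xi}|_Y$, and no critical point of $Y$-index $0$ or $3$ lies on $\p Y$, these operations do not touch $\Sigma_{\p}$ and produce cone singularities only in the interior.

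The hard part will be the first step: arranging that the collar $C$ embeds into $\Sigma_{\p}$ tangent to $(\Sigma_{\p})_{\xi}$. This requires matching the $1$-framing produced by \autoref{prop:regular Leg} with the framing of $\p Y$ inside $Y$, and then checking that the extension of $\Sigma_{\p}$ across the collar can be performed compatibly with the folding construction used in the proof of \autoref{prop:coLeg approx}. Once this relative compatibility is established, the remainder of the argument is a straightforward adaptation of the closed case, and the final regular coLegendrian inherits $\p Y$ as a smooth regular Legendrian boundary with all cone singularities confined to the interior.
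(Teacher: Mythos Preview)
Your overall two-step strategy---regularize a collar of $\partial Y$, then run the closed-case argument in the interior---matches the paper's. But there is a genuine gap at the interface between the two steps which you do not address and which is in fact the technical heart of the paper's proof.

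The problem is that the inner boundary $\partial Y \times \{\epsilon\}$ of your collar $C$ will \emph{not} be transverse to $(\Sigma_{\partial})_{\xi}$. Under your constraint that every critical point on $\partial Y$ has $Y$-index $1$ or $2$: at a $\partial Y$-index $0$ point with $Y$-index $1$ the collar direction is the (unique) stable direction, so the flow points \emph{toward} $\partial Y$; at a $\partial Y$-index $2$ point with $Y$-index $2$ the collar direction is unstable, so the flow points \emph{away} from $\partial Y$. Hence the flow direction along the collar is necessarily mixed, and the inner boundary cannot be transverse. (The paper's own collar, built from partial handles $PH_0, PH_1^{\pm}, PH_3$, exhibits exactly the same mixing.) Transversality of the inner boundary is precisely what is needed to glue on the interior piece: the relative C-fold construction from \cite[\S 12]{HH19} requires $\partial Y \times \{\epsilon\}$ to sit inside a contact level set of $\Sigma$, i.e., to be transverse to $\Sigma_{\xi}$. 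Without it, your ``thicken $Y\setminus C$ and glue'' step does not get off the ground.

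The paper's Step 2 is devoted entirely to fixing this. One wiggles $\partial_{\tau}^c PH_0$ to create canceling pairs of critical points above the half-saddles $b^i_{\pm}$, splits each $PH_1^-$ into three sub-handles to produce the analogous pattern above the $a^i_{\pm}$, and then locates a \emph{transverse} parallel copy $\gamma$ of the Legendrian link of the $\partial Y$-index $2$ critical point. This $\gamma$ is used, via the subdivision trick of Substep 2.5 of \autoref{prop:coLeg approx}, to split $PH_3$ into a partial $3$-handle and a full $3$-handle $H_3$; the boundary sphere of $H_3$ then replaces the offending disk $D \subset \partial Y \times \{1\}$ and ``turns the flow inside out'' so that the inner boundary becomes genuinely transverse. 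None of this is supplied by an appeal to \autoref{prop:regular Leg} and a $1$-framing computation.
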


\begin{proof}
	The proof is divided into two steps. The first step is to normalize a collar neighborhood of $\p Y$ in $Y$ such that $Y$ becomes regular near the boundary. Once this is done, the second step, which is to perturb the interior of $Y$, works in the same way as in the closed case.
	
	\s\n
	\textsc{Step 1.} \textit{Build a collar neighborhood of $\p Y \subset Y$ by partial coLegendrian handles.}
	
	\s
	Recall that a closed regular coLegendrian can be built out of coLegendrian handles described in \autoref{subsec:coLeg handles}. In fact, according to \autoref{rmk:necessary coLeg pieces}, only a sub-collection of handles are needed. In the relative case, one needs, in addition, \emph{partial handles} which can be obtained by cutting a coLegendrian handle in half along a (Legendrian) leaf passing through the critical point. Instead of exhausting all possible partial handles, we focus on describing those which we will need to construct the collar neighborhood of $\p Y$. The following list is in line with (HD1)--(HD3) in \autoref{rmk:necessary coLeg pieces}.
	\begin{itemize}
		\item[(PHD1)] Partial handles $PH_0$ and $PH_3$ of index $0$ and $3$, respectively, are modeled on cones over a $2$-disk $D \subset (S^3,\xi_{\std})$ such that $\p D$ is the standard Legendrian unknot.
		\item[(PHD2)] Model a positive turbine $1$-handle $H_1^+ \cong B^1_x \times B^2_{y,z}$ such that the Legendrian foliation $$\Fcal_{H_1^+}=\ker(ydz-zdy).$$ Then a positive turbine partial $1$-handle $PH_1^+$ is modeled on $H_1^+ \cap \{z \geq 0\}$.
		\item[(PHD3)] Model a negative $1$-handle $H_1^- \cong B^1_x \times B^2_{y,z}$ such that the Legendrian foliation  
		\begin{equation} \label{eqn:Leg foliation on negative partial H1}
		\Fcal_{H_1^-} = \ker(xdy+2ydx).
		\end{equation}
		Then the negative partial $1$-handle $PH_1^-$ is modeled on $H_1^- \cap \{y \geq 0\}$.
	\end{itemize}
	Note that the boundary of any partial handle naturally splits into two pieces: the \emph{tangential boundary} which is nothing but the Legendrian leaf along which the coLegendrian is cut apart, and the \emph{transverse boundary} which is transverse to the (restricted) characteristic foliation.  In what follows, we denote the tangential boundary of a partial handle $PH_{\ast}^{\ast}$ by $\p_L PH_{\ast}^{\ast}$, and the transverse boundary by $\p_{\tau} PH_{\ast}^{\ast}$.
	
	Using the partial handles described above, let's construct a regular collar neighborhood of $\p Y$ as follows. Suppose $\p Y$ is a (closed) genus $g$ surface. Note that $\p Y \subset \Sigma$ has self-intersection number equal to zero. It follows from \autoref{lem:Leg self intersection number} that $\p Y$ is balanced, i.e., $\chi(R_+(\p Y)) = \chi(R_-(\p Y))$. Hence we will build a collar neighborhood of $\p Y$ using one $PH_0$, $g$ copies of $PH_1^{\pm}$, respectively, and one $PH_3$, such that the tangential boundaries of these handles glue together to a (balanced) regular Legendrian structure on $\p Y$. 
	
	To spell out more details, let $\p _{\tau}^c PH_{\ast}^{\ast}$ be the intersection of $\p_{\tau} PH_{\ast}^{\ast}$ with the collar neighborhood of $\p Y$. First, identify $\p_{\tau}^c PH_0 = \Rbb/(6g-2)\Zbb \times [0,1]$ such that $\Rbb/(6g-2)\Zbb \times \{0\} =  \p_{\tau} PH_0 \cap \p_L PH_0$. One can arrange the characteristic foliation $(\p_{\tau}^c PH_0)_{\xi}$ on $\p_{\tau}^c PH_0$ such that the following hold.
	\begin{itemize}
		\item The critical points of $(\p_{\tau}^c PH_0)_{\xi}$ are the following:
		\begin{itemize}
			\item half sources at $(4i,0)$ and half sinks at $(4i+2,0)$ for $0 \leq i \leq g-1$.
			\item positive half saddles at $(4g+2j+1,0)$ and negative saddles at $(4g+2j,0)$ for $0 \leq j \leq g-1$.
		\end{itemize}
		\item The stable manifolds of the positive half saddles and the unstable manifold of the negative half saddles are contained in $\Rbb/(6g-2)\Zbb \times \{0\}$.
	\end{itemize}
	In particular $\p_{\tau}^c PH_0$ is not Morse$^+$ due to the presence of flow lines from negative half saddles to positive ones. Nevertheless, one can compute using \autoref{eqn:compute 1-framing} that $\tb(\p_{\tau} PH_0 \cap \p_L PH_0)=-1$, i.e., the link of the $\p Y$-index $0$ critical point is the standard Legendrian unknot. See \autoref{fig:Leg foliation near boundary}.
	
	Next, observe that the characteristic foliation on $\p_{\tau} PH_1^+$ consists of a half source, a half sink, and all flow lines travel from the half source to the half sink. Moreover, the attaching region $\p_- PH_1^+ \cap \p_{\tau} PH_1^+$ has two components, which are neighborhoods of the half source and the half sink, respectively. It follows that the $(i+1)$-th $PH_1^+$ can be attached to $PH_0$ such that the half source matches with the half source on $\p_{\tau}^c PH_0$ at $(4i,0)$, and the half sink matches with the half sink at $(4i+2,0)$, for $0 \leq i \leq g-1$. Similarly for negative $1$-handles, observe that the characteristic foliation on $\p_- PH_1^- \cap \p_{\tau} PH_1^-$ is a square in the sense of Substep 2.5 in the proof of \autoref{prop:coLeg approx}, i.e., all leaves are parallel to one of the sides. It follows that the $(i+1)$-th $PH_1^-$ can be attached to $PH_0$ along squares on $\p_{\tau}^c PH_0$ near $(4i+1,0)$ and $(4i+3,0)$ for $0 \leq i \leq g-1$. See \autoref{fig:Leg foliation near boundary}.
	
	\begin{figure}[ht]
		\begin{overpic}[scale=.3]{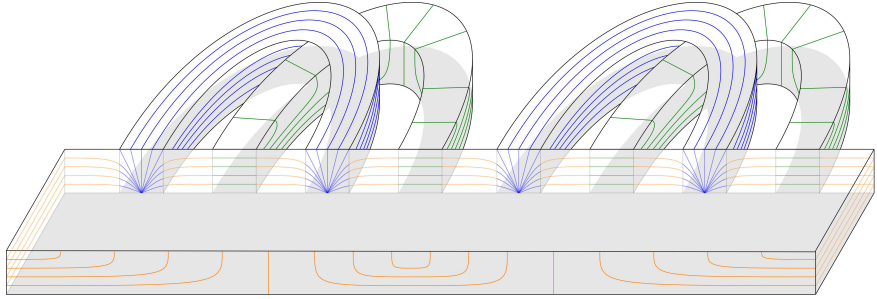}
		\end{overpic}
		\caption{A collar neighborhood of $\p Y$ in the case $g=2$. The bottom sheet (gray) represents $\p Y$, at least away from the $\p Y$-index $2$ handle. The characteristic foliations on $\p_{\tau}^c PH_0$ is drawn in orange (except on overlaps); on $\p_{\tau}^c PH_1^+$'s is drawn in blue; and on $\p_{\tau}^c PH_1^-$'s is drawn in green.}
		\label{fig:Leg foliation near boundary}
	\end{figure}
	
	After attaching the partial $1$-handles above, it remains to cap it off with the last partial $3$-handle $PH_3$ such that the characteristic foliation on $\p_{\tau}^c PH_3$ matches with the one on
	\begin{equation*}
	\p_{\tau}^c \left( PH_0 \cup (\cup_{1 \leq i \leq g-1} PH_1^+) \cup (\cup_{1 \leq i \leq g-1} PH_1^-) \right).
	\end{equation*}
	Moreover, one can check (using \autoref{eqn:compute 1-framing}) that the link of the $\p Y$-index $2$ critical point (which has $Y$-index $3$)  is the standard Legendrian unknot, as expected. Indeed, the characteristic foliation on $\p_{\tau}^c PH_3$ has precisely $2g-1$ positive half saddles and $2g-1$ negative saddles, all of which lie on $\p_{\tau} PH_3 \cap \p_L PH_3$, i.e., the link of the $\p Y$-index $2$ critical point.
	
	\s\n
	\textsc{Step 2.} \textit{Modify the collar neighborhood of $\p Y$ to achieve transversal inner boundary condition.}
	
	\s
	Identified the regular collar neighborhood of $\p Y$ constructed above with $\p Y \times [0,1]$ such that $\p Y$ is identified wit $\p Y \times \{0\}$. A problem of the above construction is that the restricted characteristic foliation $(\p Y \times [0,1])_{\xi}$ is \emph{not} transverse to the \emph{inner} boundary $\p Y \times \{1\}$. Indeed, $(\p Y \times [0,1])_{\xi}$ is pointing away from $\p Y$ near the $\p Y$-index $0$ and $1$ critical points, but is pointing towards $\p Y$ near the $\p Y$-index $2$ critical point. The goal of this step is to modified $(\p Y \times [0,1])_{\xi}$ by adjusting the partial handles such that $\p Y \times \{1\}$ becomes transversal.
	
	At the end of Step 1, we saw that the characteristic foliation on $\p_{\tau}^c PH_3$ has $4g-2$ saddles, which we label by $\{ a^1_{\pm},\dots,a^g_{\pm},b^1_{\pm},\dots,b^{g-1}_{\pm} \}$ such that $a^i_{\pm}$ are the two saddles on the boundary of the $i$-th $PH_1^-$, and the $b^i_{\pm}$'s lie on $\p_{\tau}^c PH_0$. By a $C^0$-small wiggling of $\p_{\tau}^c PH_0$, viewed as an annulus in $(S^3,\xi_{\std})$, relative to a small collar neighborhood of $\p_{\tau}^c PH_0 \cap \p_L PH_0$, one can create (canceling) pairs of critical points above the $b^i_{\pm}$'s as shown in \autoref{fig:perturb char foliation on side}.
	
	\begin{figure}[ht]
		\begin{overpic}[scale=.6]{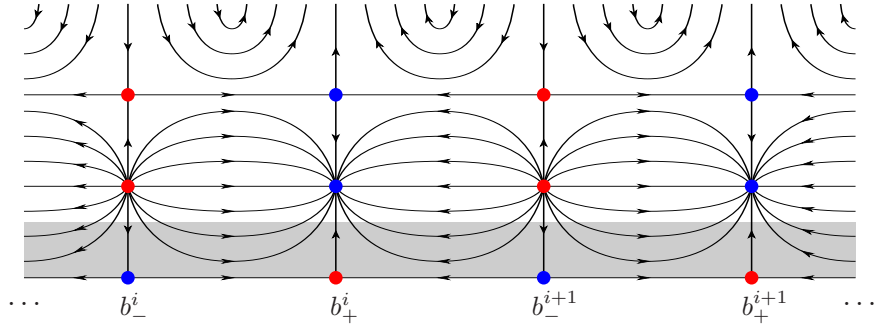}
			\put(11.5,-3.5){\small $b^i_-$}
			\put(36.8,-3.5){\small $b^i_+$}
			\put(61,-3.5){\small $b^{i+1}_-$}
			\put(86.2,-3.5){\small $b^{i+1}_+$}
			\put(-2,-2.3){$\dots$}
			\put(98.3,-2.3){$\dots$}
		\end{overpic}
		\vspace{3mm}
		\caption{Modify the characteristic foliation on $\p_{\tau}^c PH_0$. Positive critical points are marked in red and the negative ones are marked in blue. The gray band represents the small collar neighborhood of $\p_{\tau}^c PH_0 \cap \p_L PH_0$, away from which the perturbation is supported.}
		\label{fig:perturb char foliation on side}
	\end{figure}
	
	We claim that a similar pattern of the creation of critical points can be achieved on each $\p_{\tau}^c PH_1^-$ as well. Indeed, following (PHD3), let's identify $PH_1^- = H_1^- \cap \{y \geq 0\}$. Observe that the ambient contact form restricts to a Liouville form on the $xy$-plane (cf. \autoref{eqn:Leg foliation on negative partial H1}) such that the origin becomes a saddle. Up to a Liouville homotopy, one can create a pair of canceling critical points, one source $e$ and one saddle $h$, on the positive $y$-axis. For definiteness, suppose $e=(0,\tfrac{1}{3},0)$ and $h=(0,\tfrac{2}{3},0)$. As a consequence, $PH_1^-$ splits into $3$ handles, which we call \emph{top, middle} and \emph{bottom}, ordered by the $y$-coordinate, as follows
	\begin{equation*}
	PH_1^{t,-} \coloneqq PH_1^- \cap \{ \tfrac{1}{2} \leq y \leq 1 \},~ PH_1^{m,-} \coloneqq PH_1^- \cap \{ \tfrac{1}{4} \leq y \leq \tfrac{1}{2} \},\text{ and } PH_1^{b,-} \coloneqq PH_1^- \cap \{ 0 \leq y \leq \tfrac{1}{4} \}.
	\end{equation*}
	It follows that $PH_1^{t,-}$ is a negative $1$-handle as in (HD3), $PH_1^{m,-}$ is a (negative) turbine $2$-handle, and $PH_1^{b-}$ is a negative partial $1$-handle, which is isomorphic to the original $PH_1^-$. After applying this modification to all $PH_1^-$'s, the resulting characteristic foliation on $\p_{\tau}^c PH_3$, away from the strip in $\p_{\tau}^c PH_0$ which contains all the $b^i_{\pm}$'s, looks exactly like the one in \autoref{fig:perturb char foliation on side} except for the signs of the critical points. Namely, one needs to put $a^i_{\pm}$ in the places of $b^i_{\mp}$, respectively, and arrange so that the critical points which are aligned vertically have the same sign. In \autoref{fig:perturb char foliation on side}, this means that the dots along each vertical line have the same color, which can be determined by the condition that a source is always red and a sink is always blue.
	
	The point of the above modifications is that now one can easily find a parallel copy $\gamma \subset \p_{\tau}^c PH_3$ of the Legendrian unknot $\p_{\tau}^c PH_3 \cap \p_L PH_3$, which is transverse to the characteristic foliation. The rest of the argument is essentially identical to Substep 2.5 in the proof of \autoref{prop:coLeg approx}, where $\gamma$ plays the role of $\beta''$ there. Namely, one divides $PH_3$ along a $2$-disk bounding $\gamma$ into a partial $3$-handle and a (full) $3$-handle, which we call $H_3$. The $2$-disk itself is turned into a number (depending on $\op{sl}(\gamma)$) of $1$ and $2$-handles.
	
	Finally, let's explain how to make the inner boundary $\p Y \times \{1\}$ transverse to the modified $(\p Y \times [0,1])_{\xi}$. Since all the above perturbations are supported in the interior of $\p Y \times [0,1]$, the relative positions between $\p Y \times \{1\}$ and $(\p Y \times [0,1])_{\xi}$ has not changed at all. Namely, the characteristic foliation is outward pointing along the $0$ and $1$-handles on $\p Y \times \{1\}$ and inward pointing on a $2$-disk $D \subset \p Y \times \{1\}$ representing the $2$-handle. It suffices to ``turn $(\p Y \times [0,1])_{\xi}$ inside out'' along $D$. Observe that $D \subset \p H_3$ is part of a transverse $2$-sphere. Hence the following surface
	\begin{equation*}
	(\p Y \times \{1\} \setminus D) \cup (\p H_3 \setminus D),
	\end{equation*}
	up to corner rounding, is the desired transverse parallel copy of $\p Y$.
	
	\s\n
	\textsc{Step 3.} \textit{Make the interior of $Y$ regular.}
	
	\s
	By the previous steps, we can assume that a collar neighborhood $\p Y \times [0,1]$ of $\p Y = \p Y \times \{0\}$ is regular and $\p Y \times \{1\}$ is transverse to the (restricted) characteristic foliation. In particular $\p Y \times \{1\}$ can be viewed as a surface inside a contact $3$-manifold. Furthermore assume by genericity that the characteristic foliation on $\p Y \times \{1\}$ is Morse$^+$. Then the techniques for making (the interior of) $Y$ regular is the same as in the absolute case (i.e., Step 1 of the proof of \autoref{prop:coLeg approx}) except that one needs \emph{relative C-folds} rather than the usual C-folds in this case as explained in \cite[Section 12]{HH19}.
\end{proof}

\begin{remark}
With a bit more effort, \autoref{prop:coLeg approx with bdry} can be strengthened so that one can prescribe a balanced regular Legendrian structure on $\p Y$, and the coLegendrian approximation can be made relative to a collar neighborhood of $\p Y$.
\end{remark}

\begin{remark}
In a different direction, \autoref{prop:coLeg approx with bdry} can be generalized to allow $\p Y$ to be a singular Legendrian with cone singularities. Cone singularities on a Legendrian $\Lambda$ is defined in a way similar to those on a coLegendrian. Namely, suppose $p_0 \in \Lambda \subset \Sigma$ is a critical point of $\Sigma$-index $0$. Then the link $\link(\Lambda,p_0)$ is a Legendrian unknot in $(S^3,\xi_{\std})$. We say $p_0$ is a \emph{cone singularity} of $\Lambda$. Clearly $\Lambda$ is smooth(able) at $p_0$ if and only if $\tb(\link(\Lambda,p_0))=-1$. For example, if $\tb(\link(\Lambda,p_0))=-2$, then $\Lambda$ is singular at $p_0$, and the singularity is traditionally called a \emph{unfurled swallowtail} (cf. \cite{HW73,EM09}). Note, once again, that the usual unfurled swallowtail refers to the singularity of a (smooth) map, and what we mean here is rather the image of such a map.
\end{remark}

\section{Orange II and overtwistedness} \label{sec:orange II}
In \cite{HH18}, we constructed a singular, but contractible, coLegendrian which we call the \emph{overtwisted orange} $\Ocal$ such that if a contact manifold contains an embedded $\Ocal$, then it is overtwisted. Other coLegendrians which cause overtwistedness include plastikstufe \cite{Nie06} and bLob \cite{MNW13}, at least in dimension $5$ (cf. \cite{Hua17}). In this section, we add yet another overtwisted coLegendrian to the list: the \emph{orange II} $\Ocal_2$, whose main features are the following:
\begin{itemize}
	\item $\Ocal_2$ is diffeomorphic to a ball.
	\item $\Ocal_2$ can be found in some overtwisted Morse (but \emph{not} Morse$^+$) hypersurfaces considered in \cite{HH18}.
	\item The proof that $\Ocal_2$ implies overtwistedness is an immediate consequence of \cite{CMP19}, in contrast to all the other models: plastikstufe, bLob and $\Ocal$, where quite some effort is needed.
\end{itemize}

We continue assuming $\dim M=5$ for the following reason: although, as we will see, the construction of $\Ocal_2$ (and the fact that it implies overtwistedness) can be easily generalized to any dimension, we prefer to view it as a regular coLegendrian but the foundation of regular coLegendrians, i.e., the higher dimensional form of \autoref{thm:coLeg approx}, has not been fully established yet.

As a regular coLegendrian, one builds an $\Ocal_2$ by assembling four pieces together: one $0$-handle $H_0$; one $3$-handle $H_3$; one positive half-handle $PH_2^+$ of $\Ocal_2$-index $2$; and one \emph{strange} negative half-handle $PH_1^{s,-}$ of $\Ocal_2$-index $1$. Here $PH_2^+$ and $PH_1^{s,-}$ are described explicitly as follows (the indexing is a continuation of the ones in \autoref{subsec:coLeg with bdry}).
\begin{itemize}
	\item[(PHD4)] Model a positive $2$-handle $H_2^+ \cong B^2_{x,y} \times B^1_z$ such that the Legendrian foliation
		\begin{equation*}
		\Fcal_{H_2^+} = \ker(xdz+2zdx).
		\end{equation*}
	Then $PH_2^+$ is identified with $H_2^+ \cap \{ z \geq 0 \}$.
	\item[(PHD5)] Model a negative $1$-handle $H_1^- \cong B^1_x \times B^2_{y,z}$ such that the Legendrian foliation
		\begin{equation*}
		\Fcal_{H_1^-} = \ker(xdy+2ydx).
		\end{equation*}
	Then $PH_1^{s,-}$ is identified with $H_1^- \cap \{ x \geq 0 \}$. This is to be distinguished from the $PH_1^-$ constructed in (PHD3).
\end{itemize}

Now the recipe for assembling $\Ocal_2$ consists of the following four steps:
\begin{enumerate}
	\item Prepare a standard $H_0$ in the sense that $\p H_0$ is a standard $2$-sphere in $(S^3,\xi_{\std})$.
	\item Attach $PH_1^{s,-}$ to $H_0$ along a square.
	\item Attach $PH_2^+$ to $PH_1^{s,-}$ such that the stable $2$-disk of $PH_2^+$ is glued to the unstable $2$-disk of $PH_1^{s,-}$ to form a (Legendrian) $2$-sphere, which will be $\p \Ocal_2$.
	\item Cap off the boundary component which is \emph{not} $\p \Ocal_2$ by filling in the standard $H_3$.
\end{enumerate}
In particular, both $H_0$ and $H_3$ are smooth, and so is $\Ocal_2$. Note that the characteristic foliation on $\Ocal_2$ is not Morse$^+$ since when restricted to $\p \Ocal_2$, all the flow lines go from the \emph{negative} source to the \emph{positive} sink.

The Legendrian foliation $\Fcal_{\Ocal_2}$ can be visualized as shown in \autoref{fig:O2}. In particular, away from $\p \Ocal_2$, all leaves of $\Fcal_{\Ocal_2}$ are disks except for one leaf which is an annulus. Since the contact germ on $\Ocal_2$ is completely determined by $\Fcal_{\Ocal_2}$, one can forget about the regular coLegendrian structure and only remember the Legendrian foliation. In this case, one needs to specify the \emph{co-orientations} of the singular loci of $\Fcal_{\Ocal_2}$ as follows. For definiteness, let's identify $\Ocal_2$ with the unit $3$-ball in $\Rbb^3_{x,y,z}$ such that the vertical axis as shown in \autoref{fig:O2} is identified with the $z$-axis. Let $P \coloneqq \{ x \geq 0,y=0 \} \subset \Rbb^3$ be a half-plane. Then $\Ocal_2 \cap P$ is a half-disk equipped with a line field with two critical points, one elliptic $e$ in the interior and one half-hyperbolic $h$ on the boundary. \emph{We require $d\alpha$, restricted to the half-disk, to have opposite signs at $e$ and $h$.} The individual signs are irrelevant, and depend on a choice of the contact form $\alpha$ and the orientation of the half-disk. It is in this form that $\Ocal_2$ can be generalized to arbitrary dimensions in the obvious way.

\begin{figure}[ht]
\begin{overpic}[scale=.3]{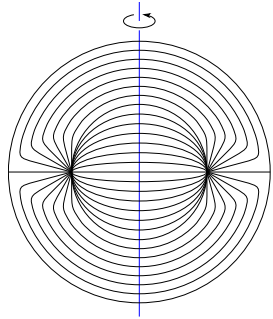}
\end{overpic}
\caption{A cross section of the Orange II, which can be obtained by spinning about the blue axis.}
\label{fig:O2}
\end{figure}

It remains to show that $\Ocal_2$ implies overtwistedness. 

\begin{prop} \label{prop:orange II is OT}
If there exists $\Ocal_2 \subset (M,\xi)$, then $\xi$ is overtwisted.
\end{prop}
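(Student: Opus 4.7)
The plan is to identify inside $\Ocal_2$ a Plastikstufe of the kind detected by \cite{CMP19} and then invoke their main theorem, which asserts that any contact manifold containing such an embedded object is overtwisted. The entire argument is therefore a matter of matching local models and verifying signs, which is why the authors can legitimately call the proof an immediate consequence of \cite{CMP19}.

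First, I would localize the Plastikstufe. Under the spinning description of $\Ocal_2$, the interior elliptic singularity $e$ of the half-disk cross-section sweeps out a circle $\gamma_e \subset \mathrm{int}(\Ocal_2)$ of elliptic singularities of $\Fcal_{\Ocal_2}$, and the boundary half-hyperbolic $h$ sweeps out a circle $\gamma_h \subset \p \Ocal_2$. Choose a small tubular neighborhood $N = N(\gamma_e) \cong S^1 \times D^2$ disjoint from the spinning axis and from $\p \Ocal_2$. By the explicit spun model, the non-singular leaves of $\Fcal_{\Ocal_2}$ meeting $N$ restrict to meridional disks intersecting $\gamma_e$ transversally at a single point, and a $C^0$-small leafwise adjustment makes $\p N$ Legendrian. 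The pair $(N, \Fcal_{\Ocal_2}|_N)$ is then a Plastikstufe in the sense of \cite{Nie06}.

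Second, I would check the sign hypothesis of \cite{CMP19}. The defining condition of $\Ocal_2$, that $d\alpha$ takes opposite signs at $e$ and $h$ on the cross-section half-disk, propagates under spinning to exactly the non-degeneracy condition \cite{CMP19} imposes at the elliptic core of the Plastikstufe. The annular leaf of $\Fcal_{\Ocal_2}$ joining $\gamma_e$ to $\gamma_h$ serves as an explicit witness that $N$ carries the non-cancelable sign configuration, precluding its absorption into a tight neighborhood. The main technical point in the whole argument is the routine identification of $\Fcal_{\Ocal_2}$ near $\gamma_e$ with the standard Plastikstufe model; this follows from the explicit formulas in (PHD4)--(PHD5) combined with the standard form of $H_0$. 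Once the identification and sign check are in place, \cite{CMP19} immediately yields overtwistedness of $(M,\xi)$.
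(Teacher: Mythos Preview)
Your approach does not work, and it diverges fundamentally from the paper's argument.

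First, what you describe is not a Plastikstufe. In a Plastikstufe $PS(B)$ the core $B$ is the \emph{singular locus} of the Legendrian foliation, and the leaves are half-open annuli $[0,1)\times B$ emanating from $B$ like pages from a binding. Your model has ``meridional disks intersecting $\gamma_e$ transversally at a single point''; a foliation by disks transverse to a circle is essentially the opposite of the Plastikstufe picture, and no small leafwise adjustment will convert one into the other.

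Second, the circle $\gamma_e$ is not a singular circle of $\Fcal_{\Ocal_2}$. The elliptic point $e$ is a singularity of the \emph{induced line field on the cross-section half-disk}, i.e., a point where the $2$-dimensional Legendrian leaf is tangent to the half-plane $P$. That is a tangency locus, not a singularity of $\Fcal_{\Ocal_2}$ itself. Indeed the paper tells you explicitly what the leaves are: away from $\p\Ocal_2$ they are all disks except for a single annulus $A$. There is no circle of elliptic singularities with a book of annular pages around it, so there is no Plastikstufe to extract. The sign condition you check is the sign of $d\alpha$ on the cross-section at $e$ and $h$, which is not the same data as the Plastikstufe non-degeneracy condition.

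The paper's proof uses \cite{CMP19} in a completely different way. It exhibits three Legendrian $2$-spheres built from leaves of $\Fcal_{\Ocal_2}$: the standard unknot $\Lambda=D_1\cup D_2$, an isotopic sphere $\Lambda'=D_1\cup A\cup S$, and $\p\Ocal_2$. Using the Legendrian-disk foliation between $D_2$ and $A\cup S$ one gets $\Lambda\simeq\Lambda'$, and a front-projection comparison shows $\Lambda'$ is a stabilization of $\p\Ocal_2$. Thus the standard Legendrian unknot admits a destabilization, and the relevant theorem of \cite{CMP19} is the criterion that this forces overtwistedness --- not a Plastikstufe detection result.
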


\begin{proof}
To facilitate the exposition, let's introduce some notations as follows. Let $A$ be the annulus leaf of $\Fcal_{\Ocal_2}$; $D_1,D_2$ be two disk leaves in the interior; and $N,S \subset \p \Ocal_2$ be the northern and southern hemispheres separated by $\p A \cap \p \Ocal_2$. 

The proof involves considering three Legendrian $2$-spheres. The first one is $\Lambda \coloneqq D_1 \cup D_2$. Throughout the proof we will skip the step of rounding the corners when it is obvious. Clearly $\Lambda$ is the standard Legendrian unknot. The second one is $\Lambda' \coloneqq D_1 \cup A \cup S$, which we claim to be Legendrian isotopic to $\Lambda$. Indeed, it follows from the observation that $D_2$ and $A \cup S$ cobound a $3$-ball in $\Ocal_2$ which is foliated by Legendrian disks with the same boundary $\p D_2$. Finally, the third one is $\p \Ocal_2$ which we claim to be a ``destabilization'' of $\Lambda'$, i.e., $\Lambda'$ is a stabilization of $\p \Ocal_2$. This is most easily seen through the front projection. Namely, on each cross section, $\Lambda'$ is obtained from $\p \Ocal_2$ by a positive and a negative stabilizations. See \autoref{fig:O2 in front}.

\begin{figure}[ht]
\begin{overpic}[scale=.4]{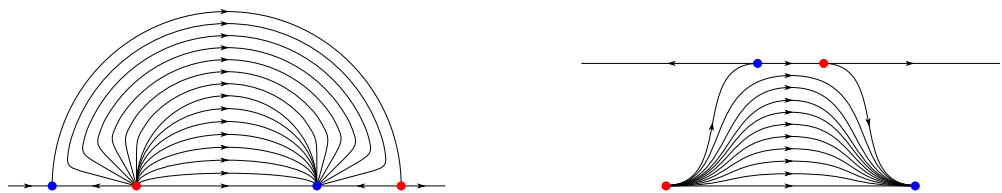}
\end{overpic}
\caption{The front view of a cross section of $\Ocal_2$ between $\Lambda'$ and $\p \Ocal_2$. The red dots indicate positive critical points, while the blue dots indicate negative ones.}
\label{fig:O2 in front}
\end{figure}

To summarize, we have shown that the standard Legendrian unknot $\Lambda$ is a stabilization of $\p \Ocal_2$. The proposition now follows from \cite{CMP19}.
\end{proof}

\begin{remark}
\autoref{prop:orange II is OT} holds in any dimension with the same proof. In particular, in dimension $3$, $\p \Ocal_2$ is a $\tb=1$ Legendrian unknot.
\end{remark}

\bibliography{mybib}

\providecommand{\bysame}{\leavevmode\hbox to3em{\hrulefill}\thinspace}
\providecommand{\MR}{\relax\ifhmode\unskip\space\fi MR }
\providecommand{\MRhref}[2]{%
  \href{http://www.ams.org/mathscinet-getitem?mr=#1}{#2}
}
\providecommand{\href}[2]{#2}
\begin{thebibliography}{MNW13}

\bibitem[BEM15]{BEM15}
Matthew~Strom Borman, Yakov Eliashberg, and Emmy Murphy, \emph{Existence and
  classification of overtwisted contact structures in all dimensions}, Acta
  Math. \textbf{215} (2015), no.~2, 281--361. \MR{3455235}

\bibitem[Ben83]{Ben83}
Daniel Bennequin, \emph{Entrelacements et \'{e}quations de {P}faff}, Third
  {S}chnepfenried geometry conference, {V}ol. 1 ({S}chnepfenried, 1982),
  Ast\'{e}risque, vol. 107, Soc. Math. France, Paris, 1983, pp.~87--161.
  \MR{753131}

\bibitem[CE12]{CE12}
Kai Cieliebak and Yakov Eliashberg, \emph{From {S}tein to {W}einstein and
  back}, American Mathematical Society Colloquium Publications, vol.~59,
  American Mathematical Society, Providence, RI, 2012, Symplectic geometry of
  affine complex manifolds. \MR{3012475}

\bibitem[CE18]{CE18}
Sylvain Courte and Tobias Ekholm, \emph{Lagrangian fillings and complicated
  {L}egendrian unknots}, Proceedings of the {G}\"{o}kova {G}eometry-{T}opology
  {C}onference 2017, Int. Press, Somerville, MA, 2018, pp.~1--8. \MR{3838082}

\bibitem[CMP19]{CMP19}
Roger Casals, Emmy Murphy, and Francisco Presas, \emph{Geometric criteria for
  overtwistedness}, J. Amer. Math. Soc. \textbf{32} (2019), no.~2, 563--604.
  \MR{3904160}

\bibitem[EG91]{EG91}
Yakov Eliashberg and Mikhael Gromov, \emph{Convex symplectic manifolds},
  Several complex variables and complex geometry, {P}art 2 ({S}anta {C}ruz,
  {CA}, 1989), Proc. Sympos. Pure Math., vol.~52, Amer. Math. Soc., Providence,
  RI, 1991, pp.~135--162. \MR{1128541}

\bibitem[EGL20]{EGL20}
Yakov Eliashberg, Sheel Ganatra, and Oleg Lazarev, \emph{Flexible
  {L}agrangians}, Int. Math. Res. Not. IMRN (2020), no.~8, 2408--2435.
  \MR{4090744}

\bibitem[EH03]{EH03}
John~B. Etnyre and Ko~Honda, \emph{On connected sums and {L}egendrian knots},
  Adv. Math. \textbf{179} (2003), no.~1, 59--74. \MR{2004728}

\bibitem[Eli89]{Eli89}
Yakov Eliashberg, \emph{Classification of overtwisted contact structures on
  {$3$}-manifolds}, Invent. Math. \textbf{98} (1989), no.~3, 623--637.
  \MR{1022310}

\bibitem[Eli92]{Eli92}
\bysame, \emph{Contact {$3$}-manifolds twenty years since {J}. {M}artinet's
  work}, Ann. Inst. Fourier (Grenoble) \textbf{42} (1992), no.~1-2, 165--192.
  \MR{1162559}

\bibitem[EM09]{EM09}
Yakov Eliashberg and Nikolai Mishachev, \emph{Wrinkled embeddings}, Foliations,
  geometry, and topology, Contemp. Math., vol. 498, Amer. Math. Soc.,
  Providence, RI, 2009, pp.~207--232. \MR{2664601}

\bibitem[Etn05]{Et05}
John~B. Etnyre, \emph{Legendrian and transversal knots}, Handbook of knot
  theory, Elsevier B. V., Amsterdam, 2005, pp.~105--185. \MR{2179261}

\bibitem[FT97]{FT97}
Dmitry Fuchs and Serge Tabachnikov, \emph{Invariants of {L}egendrian and
  transverse knots in the standard contact space}, Topology \textbf{36} (1997),
  no.~5, 1025--1053. \MR{1445553}

\bibitem[Gir91]{Gi91}
Emmanuel Giroux, \emph{Convexit\'e en topologie de contact}, Comment. Math.
  Helv. \textbf{66} (1991), no.~4, 637--677. \MR{1129802}

\bibitem[Gir00]{Gi00}
\bysame, \emph{Structures de contact en dimension trois et bifurcations des
  feuilletages de surfaces}, Invent. Math. \textbf{141} (2000), no.~3,
  615--689. \MR{1779622}

\bibitem[Gir02]{Gi02}
\bysame, \emph{G\'{e}om\'{e}trie de contact: de la dimension trois vers les
  dimensions sup\'{e}rieures}, Proceedings of the {I}nternational {C}ongress of
  {M}athematicians, {V}ol. {II} ({B}eijing, 2002), Higher Ed. Press, Beijing,
  2002, pp.~405--414. \MR{1957051}

\bibitem[Gro86]{Gro86}
Mikhael Gromov, \emph{Partial differential relations}, Ergebnisse der
  Mathematik und ihrer Grenzgebiete (3) [Results in Mathematics and Related
  Areas (3)], vol.~9, Springer-Verlag, Berlin, 1986. \MR{864505}

\bibitem[HHa]{HH18}
Ko~Honda and Yang Huang, \emph{Bypass attachments in higher-dimensional contact
  topology}, preprint 2018, arXiv:1803.09142.

\bibitem[HHb]{HH19}
\bysame, \emph{Convex hypersurface theory in contact topology}, preprint 2019,
  arXiv:1907.06025.

\bibitem[Hon00]{Hon00}
Ko~Honda, \emph{On the classification of tight contact structures. {I}}, Geom.
  Topol. \textbf{4} (2000), 309--368. \MR{1786111}

\bibitem[Hua]{Hua14}
Yang Huang, \emph{Non-existence of certain singularities in {L}egendrian
  foliations}, preprint 2014, arXiv:1411.5870.

\bibitem[Hua15]{Hua15}
\bysame, \emph{On {L}egendrian foliations in contact manifolds {I}:
  {S}ingularities and neighborhood theorems}, Math. Res. Lett. \textbf{22}
  (2015), no.~5, 1373--1400. \MR{3488380}

\bibitem[Hua17]{Hua17}
\bysame, \emph{On plastikstufe, bordered {L}egendrian open book and overtwisted
  contact structures}, J. Topol. \textbf{10} (2017), no.~3, 720--743.
  \MR{3665409}

\bibitem[HW73]{HW73}
Allen Hatcher and John Wagoner, \emph{Pseudo-isotopies of compact manifolds},
  Soci\'{e}t\'{e} Math\'{e}matique de France, Paris, 1973, With English and
  French prefaces, Ast\'{e}risque, No. 6. \MR{0353337}

\bibitem[Mil65]{Mil65}
John~W. Milnor, \emph{Topology from the differentiable viewpoint}, Based on
  notes by David W. Weaver, The University Press of Virginia, Charlottesville,
  Va., 1965. \MR{0226651}

\bibitem[MNW13]{MNW13}
Patrick Massot, Klaus Niederkr\"{u}ger, and Chris Wendl, \emph{Weak and strong
  fillability of higher dimensional contact manifolds}, Invent. Math.
  \textbf{192} (2013), no.~2, 287--373. \MR{3044125}

\bibitem[Mur]{Mur12}
Emmy Murphy, \emph{Loose {L}egendrian embeddings in high dimensional contact
  manifolds}, preprint 2012, arXiv:1201.2245.

\bibitem[Nie06]{Nie06}
Klaus Niederkr\"{u}ger, \emph{The plastikstufe---a generalization of the
  overtwisted disk to higher dimensions}, Algebr. Geom. Topol. \textbf{6}
  (2006), 2473--2508. \MR{2286033}

\end{thebibliography}
\bibliographystyle{amsalpha}

\end{document}